\newcommand{\Rref}{\ref}
   \newcommand{\yy}{\mathbf{y}}
 \newcommand{\eee}{\mathbf{e}}
\newcommand{\ttens}{\mathcal{T}}
\newcommand{\dd}{\mathcal{D}}
\newcommand{\ff}{\mathcal{F}}
\newcommand{\uu}{u}
\newcommand{\nn}{\textbf{n}}
\newcommand{\xx}{\textbf{x}}
\newcommand{\vv}{v}
\newcommand{\mm}{\mathcal{M}}
\newcommand{\llm}{\mathcal{L}}
\newcommand{\pt}{\partial}
\definecolor{darkWhite}{rgb}{0.94,0.94,0.94}
\newtheorem{thrm}{Theorem}[section]
\newtheorem{prpstn}[thrm]{Proposition}
\newtheorem{dfntn}[thrm]{Definition}
\newtheorem{rmrk}[thrm]{Remark}
\begin{document}

  \title{\noindent Error estimate of the Non Intrusive Reduced Basis method with finite volume schemes }
\maketitle
\normalsize
\begin{center}
\author{Elise Grosjean \footnotemark[1],}
\author{Yvon Maday \footnotemark[1] \footnotemark[2]}
\end{center}

\footnotetext[1]{Sorbonne Universit\'e and Universit\'e de Paris, CNRS, Laboratoire Jacques-Louis Lions (LJLL), F-75005 Paris, France}
\footnotetext[2]{Institut Universitaire de France}

\date


\begin{abstract}
The context of this paper is the simulation of parameter-dependent partial differential equations (PDEs). When the aim is to solve such PDEs for a large number of parameter values, Reduced Basis Methods (RBM) are often used to reduce computational costs of a classical high fidelity code based on Finite Element Method (FEM), Finite Volume (FVM) or Spectral methods. The efficient implementation of most of these RBM requires to modify this high fidelity code, which cannot be done, for example in an industrial context if the high fidelity code is only accessible as a "black-box" solver. The Non Intrusive Reduced Basis method (NIRB) has been introduced in the context of finite elements as a good alternative to reduce the implementation costs of these parameter-dependent problems. The method is efficient in other contexts than the FEM one, like with finite volume schemes, which are more often used in an industrial environment. In this case, some adaptations need to be done as the degrees of freedom in FV methods have different meanings. At this time, error estimates have only been studied with FEM solvers. 
In this paper, we present a generalisation of the NIRB method to Finite Volume schemes and we show that estimates established for FEM solvers also hold in the FVM setting. We first prove our results for the hybrid-Mimetic Finite Difference method (hMFD), which is part the Hybrid Mixed Mimetic methods (HMM) family. Then, we explain how these results apply more generally to other FV schemes. Some of them are specified, such as the Two Point Flux Approximation (TPFA). 
\end{abstract}
\vspace{1cm}
Keywords: Reduced Basis Method, Finite Volume Method

\section*{Introduction}
This paper is concerned with the efficient simulation of parameter-dependent partial differential equations (PDEs), with a parameter varying in a given set $\mathcal{G}$. For complex physical systems, computational costs can be huge. It may happen, for instance in the context of parameter optimization or real time simulations in an industrial context, that the same problem needs to be solved for several parameter values.

In such cases, different model order reductions (MOR) like the reduced basis methods have been proposed (see eg~\cite{rb, hesthaven2016certified}) based on POD or greedy selection of the reduced basis, the reduced basis elements being computed accurately enough through a high fidelity code. In these approaches, the efficient implementation of the reduced method, leading to reductions in the computational time, requires to be able to deeply enter into the high fidelity code, in order to compute offline, a key ingredient which saves the implementation costs online. This can be tedious, even impossible when the code has been bought, as it is often the case in an industrial context. The Non Intrusive Reduced Basis methods (NIRB)~\cite{madaychakir,NIRB1} has been proposed in this framework. This method is useful to reduce computational costs of parametric-dependent PDEs in a non intrusive way.  Unlike other MOR, the NIRB method does not require to modify the solver code and hence does not depend on the numerical approach underlying the code.  
This method, based on two grids, one fine where high fidelity computations are done offline and one coarse which is used online, has been introduced in~\cite{madaychakir,NIRB1}. It was presented and analysed in the case where the high fidelity code is based on a finite element solver. In these papers, an optimal error estimate is recovered and illustrated with numerical simulations. The method can be extended to other classical discretizations but the key ingredient is a better approximation rate in the $L^2$ norm than in the energy norm, thanks to the Aubin-Nitsche's  trick that is easy for variational approximations. In addition, the degrees of freedom in FVM don't have the same status as in FEM and the transfer of information from one grid to another must be adapted. 
The aim of this paper is to propose the adaptation of the NIRB method to FV and to propose the numerical analysis able to recover the classical error estimate with Finite Volume (FV) schemes. 

\subsection*{The Non Intrusive Reduced Basis Method.}

Let $\Omega$ be an open bounded domain in $\mathbb{R}^d$ with $d\leq 3$.
The NIRB method in the context of a high fidelity solver of finite element or finite volume types involves two partitioned meshes, one fine mesh  $\mathcal{M}_h$ and one coarse mesh $\mathcal{M}_H$, where $h$ and $H$ are the respective sizes of the meshes and $h<\! < H$. The size $h$ (res. $H$) is defined as $h=\underset{K \in \mathcal{M}_h}{\max\ } h_{K}$ (resp. $H=\underset{K \in \mathcal{M}_H}{\max\ } H_{K}$) where the diameter $h_K$ (or $H_K$) of any element $K$ in a mesh is equal to $\underset{x,y \in K}{\sup \ } |x-y|$. \\ 

As it is classical in other reduced basis methods, the NIRB method is based on the assumption (assumed or actually checked) that the manifold of all solutions $\mathcal{S} = \{u(\mu), \mu \in \mathcal{G} \}$ has a small Kolmogorov n-width $\varepsilon(n)$~\cite{kolmo}. This leads to the fact that very few well chosen solutions are sufficient to approximate well any element in~$\mathcal{S}$. These well chosen elements are called the snapshots. In this frame, the method
 is based on two steps :
one offline step and one online. The ``offline'' part is costly in time because the snapshots must be generated with a  high fidelity code on the fine mesh $\mathcal{M}_h$. The  ``online'' step 
is performed on the coarse mesh $\mathcal{M}_H$, and thus much less expensive than a high fidelity computation. This algorithm remains effective as the offline part is performed only once and in advance and also independently from the online stage. The online stage can then be done as many times  as desired.

\begin{itemize}
    \item In the offline part, several snapshots are  computed on the fine mesh for different well chosen parameters in the parameter set $\mathcal{G}$ with the (fine and costly) solver. The best way to determine the required parameters is through a greedy procedure~\cite{veroy2003posteriori, barrault2004empirical, greedy} if available or through an SVD approach. 
    \item The online part consists in computing a coarse solution with the same solver for some (new) parameter $\mu \in \mathcal{G}$ and then $L^2$-project this (coarse) solution on the (fine) reduced basis. This results in an improved approximation, in the sense that we may retrieve almost fine error estimates with a much lower computational cost. 
\end{itemize}
\subsection*{Motivation and earlier works.}
Several papers have underlined the efficiency of the NIRB method in the finite element context, illustrated both with numerical results presenting error plots and the online part compurational time~\cite{madaychakir,NIRB1, NIRB2,NIRB3}. 
However, to the best of our knowledge, works with Finite Volume (FV) schemes have not yet been studied with a non intrusive approach~\cite{iliev2013two,stabile2017pod,haasdonk2008reduced,TPFABR,Casenave2014}, and they are often preferred to finite element methods in an industrial context. Thanks to recent works on super-convergence~\cite{supconv2}, and with some technical subtleties, we are now able to generalize the two-grids method which is non intrusive to FV methods and propose the numerical analysis of this method.

\begin{center}
\begin{tikzpicture}[every text node part/.style={align=center}]
      
     \node[draw=black,thick,rectangle,rounded corners=3pt,fill=orange!40] (snap)at(0,4.5){   \textcolor{black}{ Snapshots:} \\  \textcolor{black}{\{$u_h(\mu_1),\ldots,u_h(\mu_{N})$\}} \\ \textcolor{black}{computed on a fine mesh $\mathcal{M}_h$}};

    \draw[color=orange] (-5,2) rectangle (5,6);
    \draw (0,5.7) node[] {\textbf{Offline}};
     
 \node[draw,rectangle,color=blue!80,minimum width=10cm,minimum height=4cm](aaaa)at(0,-0.5){};
 \draw (0,1.2) node[] {\textbf{Online}};
        \node[draw,circle,rounded corners=2pt,fill=black!25] (solver)at(-5,1.6){\quad   Solver   \quad};
        \node[draw,rectangle,rounded corners=3pt,fill=orange!40] (basis)at(0,2.8){Orthonormal basis: $(\Phi_i^h)_{i=1,\ldots,N}$};
        \node[draw=yellow,thick,rectangle,rounded corners=3pt,fill=blue!30](coarsesol)at(0,0.3){Coarse solution: $u_H(\mu)$ computed on $\mathcal{M}_H$};
    
         \node[draw,rectangle,rounded corners=3pt,fill=blue!25] (app)at(0,-1.8){\textcolor{red}{NIRB approximation: $u_{hH}^N(\mu)$} };
         
          \draw[->,draw=orange,fill=blue,line width=0.3mm] (snap) -- (basis) node[midway,left]{\textcolor{orange}{Greedy}};
         \draw[->,draw=orange,fill=blue,line width=0.3mm] (basis) -- (aaaa){};
   
          \draw[->,draw=black,fill=blue,line width=0.3mm] (solver) -- (snap) node[midway,left]{};
         \draw[->,draw=black,fill=blue,line width=0.3mm] (solver) -- (coarsesol) node[midway,left]{};
  \draw[->,draw=blue!80,fill=blue,line width=0.3mm] (coarsesol) -- (app) node[midway,left]{\textcolor{blue!80}{$L^2$-projection: set $u_{hH}^N(\mu) =$}\\ \textcolor{blue!80}{ $\overset{N}{\underset{i=1}{\sum}}(u_H(\mu),\Phi_i^h)\Phi_i^h$}};


\end{tikzpicture}

  \end{center}

\subsection*{Main results.}

In the context of $P_1$-FEM solvers, the works~\cite{madaychakir,NIRB1} retrieve an estimate error of the order of $\mathcal{O}(h+H^2)$ in the energy norm using the Aubin-Nitsche's Lemma \cite{FE} for the coarse grids solution  (for a reduced basis dimension large enough). With FV schemes, no equivalent of the Aubin-Nitsche's lemma is available, instead, we consider the class of Hybrid Mimetic Mixed methods (HMM) schemes for elliptic equations and use a super-convergence property proven in (~\cite{supconv1,supconv2,tpfascheme}).\\
Let us consider the following linear second-order parameter dependent problem as our model problem:
\begin{subnumcases}
     \strut - \ \textrm{div} (A(\mu) \nabla u)=f \textrm{ in } \Omega,& \label{ellip1}\\
    \label{ellip2} u = 0 \textrm{ on } \pt \Omega, &
\end{subnumcases}
where  $f\in L^2(\Omega)$,  $\mu$ is  a parameter in a set $\mathcal{G}$, and for any $\mu \in \mathcal{G}$,  $A(.; \mu):\Omega \to
\mathbb{R}^{d\times d}$ is measurable, bounded, uniformly elliptic, and $A(\xx; \mu)$ is symmetric for a.e. $\xx \in \Omega$.

Under general hypotheses, it is well known that this problem has a unique solution.

\noindent The usual weak formulation for problem \eqref{ellip1}-\eqref{ellip2} reads:\\
Find $u \in H_0^1(\Omega)$ such that,
\begin{equation}
    \forall v \in H_0^1(\Omega), \quad a(u,v;\mu)=(f,v),
    \label{varellip}
\end{equation}
where \begin{equation*}
    a(w,v;\mu)=\int_{\Omega}A(\xx;\mu)\nabla w(\xx)\cdot \nabla v (\xx)\ d\xx,\quad \forall w, v\in H_0^1(\Omega).
\end{equation*}

The main result of this paper is the following estimate:

\begin{thrm} [NIRB error estimate for hMFD solvers]
       \label{th11}
   Let $u_{hH}^N(\mu)$ be the reduced solution projected on the fine mesh and generated with the hMFD solver with the unknowns defined on $\xx_k=\overline{\xx}_K$ (the cell centers of mass), and $u(\mu)$ be the exact solution of \eqref{varellip} under an $H^2$ regularity assumption \eqref{h2reg} (which will be stated later), then the following estimate holds \\
\begin{equation}
    \norm{u(\mu) - u_{hH}^N(\mu)}_{\dd}\leq \varepsilon(N) +C_1 h + C_2(N) H^2, 
    \label{estimationNIRB}
\end{equation}
where $C_1$ and $C_2$ are constants independent of $h$ and $H$,$C_2$ depends on $N$, the number of functions in the basis, and $\norm{\cdot}_{\dd}$ is the discrete norm introduced in section \Rref{sect2}, and $\varepsilon$ depends of the Kolmogorov n-width. If $H$ is such as $H^2\sim h$, and $\varepsilon(N)$ small enough, it results in an error estimate in $\mathcal{O}(h)$.
\end{thrm}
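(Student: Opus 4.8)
The plan is to prove the estimate by a triangle-inequality decomposition that separates the three error contributions appearing on the right-hand side of \eqref{estimationNIRB}. I would introduce the exact solution $u(\mu)$, its fine-mesh hMFD approximation $u_h(\mu)$, the NIRB reconstruction $u_{hH}^N(\mu)$ built by $L^2$-projecting the coarse solution onto the reduced basis, and an intermediate object, namely the $L^2$-projection of the \emph{fine} solution $u_h(\mu)$ onto the reduced space $X_N = \mathrm{span}(\Phi_i^h)_{i=1,\ldots,N}$. Writing
\begin{equation*}
\norm{u(\mu) - u_{hH}^N(\mu)}_{\dd} \leq \norm{u(\mu) - u_h(\mu)}_{\dd} + \norm{u_h(\mu) - P_N u_h(\mu)}_{\dd} + \norm{P_N u_h(\mu) - u_{hH}^N(\mu)}_{\dd},
\end{equation*}
where $P_N$ denotes the $L^2$-projection onto $X_N$, isolates exactly the terms one expects to bound by $C_1 h$, by $\varepsilon(N)$, and by $C_2(N) H^2$ respectively.

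First I would treat the fine discretization error $\norm{u(\mu) - u_h(\mu)}_{\dd}$: this is controlled by the standard hMFD convergence estimate on the fine mesh under the $H^2$ regularity assumption \eqref{h2reg}, yielding the $C_1 h$ contribution, with $C_1$ independent of $h$ and $H$ since the regularity bound on $u(\mu)$ is uniform over $\mathcal{G}$ (one typically uses the $\mu$-independent ellipticity and boundedness constants of $A$). Second, the reduced-basis approximation term $\norm{u_h(\mu) - P_N u_h(\mu)}_{\dd}$ is the distance of the fine solution to the $N$-dimensional reduced space; because the basis is built (via greedy or SVD) from the snapshot manifold whose Kolmogorov $n$-width is $\varepsilon(N)$, this term is bounded by $\varepsilon(N)$ (up to the usual Lebesgue-constant or discrete-norm-equivalence factor, which I would absorb or track carefully).

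The crux, and the step I expect to be the main obstacle, is the third term $\norm{P_N u_h(\mu) - u_{hH}^N(\mu)}_{\dd}$, which measures how much is lost by projecting the \emph{coarse} solution $u_H(\mu)$ rather than the fine solution $u_h(\mu)$. Since $u_{hH}^N(\mu) = P_N u_H(\mu)$ and $P_N$ is a bounded projection, this is controlled by $\norm{u_h(\mu) - u_H(\mu)}$ restricted to (or measured through) the reduced space; by the triangle inequality this splits into $\norm{u_h(\mu) - u(\mu)}$ and $\norm{u(\mu) - u_H(\mu)}$, and the coarse term would naively only give $\mathcal{O}(H)$. To recover the desired $H^2$ rate one must invoke the super-convergence property of the HMM/hMFD scheme established in \cite{supconv1,supconv2,tpfascheme}: the key point is that when the discrete solutions are compared at the cell centers of mass $\xx_k = \overline{\xx}_K$, the error between coarse and fine reconstructions, measured in the relevant discrete norm and as seen through the finite-dimensional space $X_N$, super-converges at rate $H^2$. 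This is precisely where the choice of unknowns at the centers of mass is essential, and where the $N$-dependent constant $C_2(N)$ enters — it arises from norm equivalences on the fixed finite-dimensional space $X_N$ and from the $L^2$-stability of the projection $P_N$, both of which degrade with $N$ but are harmless for fixed $N$. I would assemble the three bounds and conclude that, for $H^2 \sim h$ and $\varepsilon(N)$ sufficiently small, the total error is $\mathcal{O}(h)$.
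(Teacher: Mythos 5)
Your decomposition is exactly the paper's: the same three terms (fine FV discretization error, best-approximation error of the fine solution in the reduced space controlled by the Kolmogorov width, and the discrepancy between projecting the fine and the coarse solutions), with the first two handled identically. The architecture is right, and you correctly identify both that the third term is the crux and that super-convergence at the centers of mass is the ingredient that must rescue the naive $\mathcal{O}(H)$ coarse rate.

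However, the crux step as you state it is a gap, not a proof. The super-convergence theorem \eqref{superconvhmm} bounds $\norm{\Pi_{\dd}^H u_H(\mu) - u_{\mathcal{P}}(\mu)}_{L^2(\Omega)}$ by $CH^2$, where $u_{\mathcal{P}}(\mu)$ is the piecewise constant function equal to $u(\overline{\xx}_K;\mu)$ on each coarse cell; it does \emph{not} bound $\norm{u(\mu) - \Pi_{\dd}^H u_H(\mu)}_{L^2(\Omega)}$, which is irreducibly $\mathcal{O}(H)$ because $u_{\mathcal{P}}$ is only a piecewise constant approximation of $u$. So asserting that the coarse-versus-fine error ``as seen through $X_N$'' super-converges at rate $H^2$ is precisely the statement that still has to be proved. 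The paper's mechanism is: (i) pass from the $\norm{\cdot}_{\dd}$ norm to the $L^2$ pairing against the fixed basis functions, using the eigenvalue bound $\norm{\Pi_{\dd}^h\Phi_i^h}_{\dd}\le\sqrt{\lambda_N}$; (ii) split $u-\Pi_{\dd}^H u_H$ into $(u-\Pi_0^H u)+(\Pi_0^H u-\Pi_{\dd}^H u_H)$, where $\Pi_0^H u$ is the center-of-mass sampling, so that the second piece is exactly what super-convergence controls; (iii) for the first piece, which is only $\mathcal{O}(H)$ in $L^2$, gain the extra power of $H$ \emph{in the pairing} by inserting the averaged Taylor polynomial $Q_K u$ and using the identity $\int_K Q_K u\,d\xx = |K|\,Q_K u(\overline{\xx}_K)$ --- valid only because $\xx_K$ is the center of mass --- together with an $L^\infty$ bound on a continuous reconstruction $\Phi_i$ and a product of two $\mathcal{O}(H)$ factors for the remainder. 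Step (iii) is the bulk of the paper's Proposition and is entirely absent from your proposal; without it the coarse contribution stays at $\mathcal{O}(H)$ and the claimed $C_2(N)H^2$ bound does not follow.
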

Note that if $H$ is chosen such as $H^2 \sim h$ and $\varepsilon(N)$ small enough, it results in an error estimate in $\mathcal{O}(h)$. 

\subsection*{Outline  of  the  paper.} The  rest  of  this  paper  is  organized  as  follows. In section \Rref{sect2} we describe the mathematical context. In section \Rref{sect3} we recall the two-grids method. Section \Rref{sect4} is devoted to the proof of theorem \Rref{th11} with the hybrid-Mimetic Finite Difference scheme (hMFD). Section \Rref{sect5} generalizes theorem \Rref{th11} to other schemes, such as the Two Point Flux Approximation (TPFA). In the last section, the implementation is discussed and we illustrate the estimate with several numerical results on the NIRB method. 

\section{Mathematical Background}
\label{sect2}
\subsection{The Hybrid Mimetic Finite Difference method (hMFD)}

In this section, we recall the hybrid-Mimetic Finite Difference method (hMFD)~\cite{hmfd} and all the notations that will be necessary for the analysis of NIRB method in this finite volume context. 

This scheme uses interface values and fluxes as unknowns. The hMFD scheme, which is part of the family of Hybrid Mimetic Mixed methods (HMM) (\cite{cindy,hmm2,jd,hmm3,supconv1}), is a finite volume method despite its name. Indeed hMFD scheme relies on both a flux balance equation and on a local conservativity of numerical fluxes. HMM also includes mixed finite volume schemes (MFV) \cite{mfv} and hybrid finite volume schemes (HFV), a hybrid version of the SUSHI scheme~\cite{hfv}. This scheme is built on a general mesh, namely a polytopal mesh, which is a star-shaped mesh regarding the unknowns of the cells. \\

\noindent Describing the hMFD method requires to introduce the Gradient Discretisation (GD) method~\cite{cindy}, which is a general framework for the definition and the convergence analysis of many numerical methods (finite element, finite volume, mimetic finite difference methods, etc).\\
The GD schemes involve a discete space, a reconstruction operator and a gradient operator, which taken together are called a Gradient Discretisation. Selecting the gradient discretisation mostly depends on the boundary conditions (BCs). We now introduce the definition of GD for Dirichlet BCs as in~\cite{cindy} and the GD scheme associated to our model problem. \\


\begin{dfntn}(Gradient Discretisation) For homogeneous
Dirichlet BCs, a gradient discretisation $\dd$ is a triplet $(X_{\dd,0},\Pi_{\dd},\nabla_{\dd})$, where the space of degrees of
freedom $X_{\dd,0}$ is a discrete version of the continuous space $H_0^1(\Omega)$.
\begin{itemize}
    \item $\Pi_{\dd}:X_{\dd,0}\to L^2(\Omega)$ is a function reconstruction operator that relates an element of $X_{\dd,0}$ to a function in $L^2(\Omega)$.
\item $\nabla_{\dd}:X_{\dd,0}\to L^2(\Omega)^d$ is a gradient reconstruction in $L^2(\Omega)$ from the degrees of freedom. It must be chosen such that $\norm{\cdot}_{\dd}=\norm{\nabla_{\dd} \cdot}_{{L^2(\Omega)}^d}$ is a norm on $X_{\dd,0}$.
\end{itemize}
\end{dfntn}
\noindent In what follows, we will refer to $\Pi_\dd^H$ or $\Pi_\dd^h$ depending on the mesh considered and for the gradient reconstruction too (respectively $\nabla_\dd^H$ or $\nabla_\dd^h$).\\

\begin{dfntn} (Gradient discretisation scheme) For the variational form \eqref{varellip}, the related gradient discretisation scheme with the new operators is defined by:\\
Find $u_{\dd} \in X_{\dd,0}$ such that, $\forall v_\dd \in X_{\dd,0},$
\begin{equation}
     \int_{\Omega}A(\mu) \nabla_{\dd} u_{\dd} \cdot \nabla_{\dd} v_\dd \ d\xx=\int_{\Omega}f\  \Pi_{\dd} v_\dd \ d\xx.
    \label{variat}
\end{equation}
\end{dfntn}
We will use two general polytopal meshes (Definition 7.2~\cite{cindy}) which are admissible meshes for the hMFD scheme.

\begin{dfntn} (Polytopal mesh) Let $\Omega$ be a bounded polytopal open subset of $\mathbb{R}^d (d\geq 1).$ A polytopal mesh of $\Omega$ is a quadruplet $\mathcal{T} = (\mm, \mathcal{F},\mathcal{P}, \mathcal{V})$, where:
\begin{enumerate}
    \item $\mathcal{M}$  is a finite family of non-empty connected polytopal open disjoint subsets $\Omega$ (the cells) such that $\overline{\Omega}=\underset{K\in\mathcal{M}}{\cup}\overline{K}.$ 
    For any $K\in \mathcal{M}$,\ $|K|>0$ is the measure of $K$ and $h_K$ denotes the diameter of $K$.
\item $\mathcal{F} =  \mathcal{F}_{int}\cup  \mathcal{F}_{ext}$ is a finite family of disjoint subsets of $\overline{\Omega}$ (the edges of the mesh in 2D), such that any $\sigma \in \mathcal{F}_{int}$ is contained in $\Omega$ and any $\sigma \in \mathcal{F}_{ext}$ is contained in $\pt \Omega$. Each $\sigma \in \mathcal{F}$ is assumed to be a nonempty open subset of a hyperplane of $\mathbb{R}^d$, with a positive $(d-1)$-dimensional measure $|\sigma|$. Furthermore, for all $K \in \mathcal{M}$, there exists a subset
$\mathcal{F}_K$ of $\mathcal{F}$ such that $\pt K = \underset{\sigma \in \mathcal{F}_K}{\cup}\overline{\sigma}$. 
 We assume that for all $\sigma \in \mathcal{F}, \mathcal{M}_{\sigma}=\{K\in \mm:\ \sigma \in \mathcal{F}_K\}$ has exactly one element and $\sigma \subset \pt \Omega$ or $\mathcal{M}_{\sigma}$ has two elements and $\sigma \subset\Omega$.
The center of mass is $\overline{\xx}_{\sigma}$, and, for $K \in \mm$ and $\sigma \in \mathcal{F}_K$, $\nn_{K,\sigma}$ is the (constant) unit vector normal to $\sigma$ outward to $K$.
\item $\mathcal{P}$ is a family of points of $\Omega$ indexed by $\mathcal{M}$ and $\mathcal{F}$, denoted by   $\mathcal{P}=((\xx_K)_{K\in \mathcal{M}},(\xx_{\sigma})_{\sigma \in \mathcal{F}} )$, such that for all $K \in \mm,\ \xx_K \in K$ and for all $\sigma \in \ff,\  \xx_{\sigma} \in \sigma$. We then denote by $d_{K,\sigma}$ the signed orthogonal distance between $\xx_K$ and $\sigma \in \ff_K$, that is:\\
$d_{K,\sigma } = (\xx - \xx_K) \cdot \nn_{K,\sigma}$, for all $\xx \in \sigma. $
We then assume that each cell $K \in \mm$ is strictly star-shaped with respect to $\xx_K$, that is $d_{K,\sigma} > 0$ for all $\sigma \in \ff_K$. This implies that for all $\xx  \in K$, the line segment $[\xx_K, \xx]$
is included in $K$. We denote $\overline{\xx}_K $ the center of mass of $K$ and by $\overline{\xx}_{\sigma} $ the one of $\sigma$.
For all $K \in \mathcal{M}$ and $\sigma \in \mathcal{F}_K$, we denote by $D_{K,\sigma}$ the cone with vertex $\xx_K$ and basis $\sigma$, that is $D_{K,\sigma}= \{t\xx_K + (1 - t)\mathbf{y}, t \in (0, 1), \mathbf{y} \in \sigma \}$. 
\item $\mathcal{V}$ is a set of points (the vertices of the mesh). For $K \in \mathcal{M}$, the set of
vertices of $K$, i.e. the vertices contained in $\overline{K}$, is denoted $\mathcal{V}_K$. Similarly,
the set of vertices of $\sigma \in F$ is $\mathcal{V}_\sigma$.
\end{enumerate}
\end{dfntn}

\begin{figure}[H]
\begin{center}
\begin{pspicture}(-1,-1)(6,6)
  \psline(0,0)(2,-0.75)(5,2.75)(3,4.5)(0.5,5)(0,0)
  \psline[linestyle=dashed](0,0)(1.8,1.7)
  \psline[linestyle=dashed](0.5,5)(1.8,1.7)
  \psline[linestyle=dashed](3,4.5)(1.8,1.7)
  \psline[linestyle=dashed](5,2.75)(1.8,1.7)
  \psline[linestyle=dashed](2,-0.75)(1.8,1.7)
\psdots[dotstyle=+,dotscale=2.5,dotangle=45](1.8,1.7)
\uput[d](1.7,1.45){$\mathbf{x}_K$}
\pspolygon[opacity=0.3,fillstyle=solid,fillcolor=yellow](1.8,1.7)(2,-0.75)(5,2.75)
\uput[d](5.1,2.75){$\sigma$}
\uput[d](2.1,4.){$K$}
\psline[linestyle=dashed]{<->}(1.8,1.7)(3.15,0.67)
\uput[d](2.6,1.1){$d_{K,\sigma}$}
\uput[d](3.3,1.8){$D_{K,\sigma}$}
\psline{->}(4.15,1.7)(4.6,1.4)
\uput[d](4.65,2.1){$\mathbf{n}_{K,\sigma}$}
\end{pspicture}
\end{center}
\caption{A cell $K$ of a polytopal 2D mesh}
\end{figure}
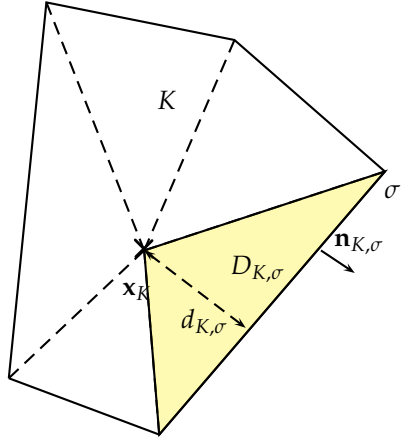

The regularity factor for the mesh is
\begin{equation}
  \theta=\underset{\sigma \in \ff_{int},\mm_{\sigma=\{K,K'\}}}{\max }
  \frac{d_{K,\sigma}}{d_{K',\sigma}}+\underset{K\in \mm}{\max}(\underset{\sigma\in \ff_K}{\max}\frac{h_K}{d_{K,\sigma}}+\textrm{Card}(\ff_K)).\label{reg}
\end{equation}

In what follows, we will consider two polytopal meshes. The fine mesh will be denoted $\mathcal{T}^h= (\mathcal{M}^h,\ff^h,\mathcal{P}^h,\mathcal{V}^h)$ and $\mathcal{T}^H= (\mathcal{M}^H,\ff^H,\mathcal{P}^H,\mathcal{V}^H)$ will be referred to as the coarse mesh.\\

\noindent All HMM schemes require to choose one point inside each mesh cell $\xx_K$, and in the case the center of mass $\overline{\xx}_K$ is chosen, then the scheme corresponds to hMFD and superconvergence is well known~\cite{supconv1,supconv2,hmm2}. Until section \Rref{sect5}, we will consider $\xx_K=\overline{\xx}_K$. \\

\begin{dfntn} (Hybrid Mimetic Mixed gradient discretisation (HMM-GD)) \\
For hMFD scheme, we use the following GD (Definition 13.1.1~\cite{cindy}):
\begin{enumerate}
    \item Let $X_{\dd,0}= \{v=((v_K)_{K\in \mm}, (v_{\sigma})_{\sigma \in \ff}): v_K\in \mathbb{R},v_{\sigma}\in \mathbb{R},v_{\sigma}=0 \textrm{ if } \sigma \in \ff_{ext}\},$
    \item $\Pi_\dd:X_{\dd,0}\to L^2(\Omega)$ is the following piecewise constant reconstruction on the mesh:\\
$\forall v \in X_{\dd,0}, \forall K \in \mm,$
\begin{equation}
  \Pi_{\dd}v(\xx)=v_K \textrm{ on }K.\label{PiD}
\end{equation}
\item $\nabla_{\dd}:X_{\dd,0}\to L^2(\Omega)^d$ reconstructs piecewise constant gradients on the cones $(D_{K,\sigma})_{K\in\mm,\sigma \in \ff_K}$:\\
$\forall v \in X_{\dd,0}, \forall K \in \mm, \forall \sigma \in \ff,\\$
\begin{equation}
    \nabla_{\dd}v(\xx)=\nabla_{K}v+\frac{\sqrt{d}}{d_{K,\sigma}}[\mathcal{L}_K R_K(v)]_{\sigma}\ \nn_{K,\sigma} \textrm{ on } D_{K,\sigma},\label{NablaD}
\end{equation}
where:
\begin{itemize}
    \item $ \nabla_{K}v=\frac{1}{|K|}\sum_{\sigma\in \ff_K}|\sigma| v_{\sigma} \nn_{K,\sigma}$,
 \item $R_K:X_{\dd,0}\to \mathbb{R}^{\ff_K} $ is given by $ R_K(v)=(R_{K,\sigma(v)}))_{\sigma \in \ff_K}$ with $ R_{K,\sigma}(v)=v_{\sigma}-v_K-\nabla_{K}v\cdot(\overline{\xx}_{\sigma}-\xx_K)$,
 \item $\llm_K$ is an isomorphism of the space $Im(R_K)$.
 \end{itemize}
\end{enumerate}
\end{dfntn}
 As explained in the introduction of this chapter, hMFD,
HFV and MFV schemes are three different presentations of the same method. With the notations above, any HMM method for the weak form \eqref{varellip} can be written (Equation 2.25~\cite{jd}):\\
Find $u_\ttens(\mu) \in X_{\dd,0}$ such that, for all $v_\ttens \in X_{\dd,0},$
\begin{equation*}
   \mu  \underset{K\in\mm}{\sum}|K| A_K(\mu)\nabla_Ku_\ttens\cdot \nabla_K v_\ttens+\underset{K\in\mm}{\sum} R_K(v_\ttens)^T\mathbb{B}_KR_K(u_\ttens)=\underset{K\in\mm}{\sum}v_K \int_Kf(\xx) \ d\xx,
    \label{hmm}
\end{equation*}
where $A_K(\mu)$ is the $L_2$ projection of $A(\mu)$ on $K$ and $\mathbb{B}_K = ((\mathbb{B}_K)_{\sigma,\sigma'})_{\sigma,\sigma' \in \ff_K}$ is a symmetric positive definite matrix.\\
For a certain choice of isomorphism $\llm_K:\Im(R_K)\to \Im(R_K)$, the HMM scheme \eqref{hmm} is identical to GDs \eqref{variat} (see Theorem 13.7~\cite{cindy}).\\

\noindent We now introduce the super-convergence property which will be used in the proof of theorem \Rref{th11}, but first we need the following $H^2$ regularity assumption (which holds if $A$ is Lipschitz continuous and $\Omega$ is convex):\\

\noindent Let $f \in L^2(\Omega),$ the solution $u(\mu)$ to \eqref{varellip} belongs to $H^2(\Omega),$ and
\begin{equation}
    \norm{u(\mu)}_{H^2(\Omega)}+\norm{A(\mu)\nabla u(\mu)}_{H^1(\Omega)^d} \leq C \norm{f}_{L^2(\Omega)},
    \label{h2reg}
\end{equation}
with $C$ depending only on $\Omega$ and $A$.\\

We define $\pi_{\mm^h}:L^2(\Omega)\to L^2(\Omega)$ as the orthogonal projection on the piecewise constant functions on $\mm^h$ that is
\begin{equation*}
  \forall \Psi \in L^2(\Omega), \quad \forall K \in \mm^h,\quad \pi_{\mm^h}\Psi=\frac{1}{|K|}\int_K \Psi(\xx)\  d\xx \textrm{ on }K. 
\end{equation*}

\begin{thrm}[Super-convergence for hMFD schemes, Theorem 4.7~\cite{supconv2}]
Let $d\leq 3$, $f\in H^1(\Omega)$, and $u(\mu)$ be the solution of $\eqref{varellip}$ under assumption \eqref{h2reg}. Let $\mathcal{T}_h$ be a polytopal mesh, and $\mathcal{D}$ be an HMM gradient discretisation on $\mathcal{T}_h$ with the unknowns defined on $\xx_K$, and let $u_{h}(\mu)$ be the solution of the corresponding GD. Recall that $\overline{\xx}_K$ is the center of mass of $K$ and we are in the case where $\xx_K=\overline{\xx}_K$. Then, considering $u_{\mathcal{P}}(\mu)$ as the piecewise constant function on $\mathcal{M}_h$ equal to $u(\overline{\xx}_K;\mu)$ on $K\in \mm$, there exists $C>0$ not depending on $h$ such that

\begin{equation}
   \norm{\Pi_{\dd}^hu_{h}(\mu)-u_{\mathcal{P}}(\mu)}_{L^2(\Omega)}\leq C(\norm{f}_{H^1(\Omega)}+\norm{u}_{H^2(\Omega)})h^2.
    \label{superconvhmm}
\end{equation}
\end{thrm}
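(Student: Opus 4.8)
The natural route is an Aubin--Nitsche duality argument transported into the gradient-discretisation framework: the extra power of $h$ is to be gained by pairing the \emph{primal} consistency error against the \emph{dual} one (each $O(h)$), while the choice $\xx_K=\overline{\xx}_K$ supplies the cancellation that removes the leading first-order contributions. First I would introduce the interpolant $P_\dd u\in X_{\dd,0}$ with $(P_\dd u)_K=u(\overline{\xx}_K;\mu)$ and $(P_\dd u)_\sigma=u(\overline{\xx}_\sigma;\mu)$, so that by \eqref{PiD} one has $\Pi_\dd^h P_\dd u = u_{\mathcal P}(\mu)$ and the quantity to estimate becomes $e:=\Pi_\dd^h\big(u_h(\mu)-P_\dd u\big)$. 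I would then introduce the adjoint solution $z\in H_0^1(\Omega)$ of $-\mathrm{div}(A(\mu)\nabla z)=e$; since $A(\mu)$ is symmetric the regularity assumption \eqref{h2reg} applies verbatim and gives $\norm{z}_{H^2(\Omega)}\le C\norm{e}_{L^2(\Omega)}$, and I let $z_h\in X_{\dd,0}$ denote the solution of the GD scheme \eqref{variat} for this right-hand side.

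Testing the dual scheme with $v=u_h(\mu)-P_\dd u$ gives $\norm{e}_{L^2(\Omega)}^2=\int_\Omega A(\mu)\nabla_\dd^h z_h\cdot\nabla_\dd^h\big(u_h(\mu)-P_\dd u\big)\,d\xx$, and testing the primal scheme with $z_h$ together with the symmetry of $A(\mu)$ converts the $u_h$-part into a source term, so that
\[
\norm{e}_{L^2(\Omega)}^2=\int_\Omega f\,\Pi_\dd^h z_h\,d\xx-\int_\Omega A(\mu)\nabla_\dd^h z_h\cdot\nabla_\dd^h P_\dd u\,d\xx .
\]
Using the continuous identities $\int_\Omega f z=\int_\Omega A(\mu)\nabla u\cdot\nabla z=\int_\Omega e\,u$, I would rewrite this as a sum of three defects: a source defect $\int_\Omega f(\Pi_\dd^h z_h-z)$, a dual-gradient defect involving $\nabla z-\nabla_\dd^h z_h$ tested against $A(\mu)\nabla u$, and a primal-reconstruction defect comparing $\nabla_\dd^h P_\dd u$ (resp. $u_{\mathcal P}(\mu)$) with $\nabla u$ (resp. $u$).

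The heart of the argument is to bound each defect by a \emph{product} of two first-order quantities. On the dual side, the generic GDM energy estimate (coming from the consistency and interpolation measures $W_\dd(A(\mu)\nabla z)$ and $S_\dd(z)$, both $O(h)$ under $\norm{z}_{H^2}$) yields $\norm{\nabla z-\nabla_\dd^h z_h}_{L^2(\Omega)}\le C h\,\norm{z}_{H^2(\Omega)}\le C h\,\norm{e}_{L^2(\Omega)}$, i.e. one factor $h$ carrying the crucial $\norm{e}$. For the primal-reconstruction defect I would expand $\nabla_\dd^h P_\dd u$ through \eqref{NablaD}: the cell part $\nabla_K P_\dd u$ reproduces $\nabla u$ up to $O(h)$, and the stabilisation carries $R_{K,\sigma}(P_\dd u)=u(\overline{\xx}_\sigma)-u(\overline{\xx}_K)-\nabla_K P_\dd u\cdot(\overline{\xx}_\sigma-\xx_K)$, a second-order Taylor remainder that is $O(h^2)$ for $u\in H^2$; paired against the dual error these give $O(h)\times O(h)$, and the one surviving first-order term in $\int_\Omega(u_{\mathcal P}(\mu)-u)\,e\,d\xx$ is killed by the center-of-mass identity $\int_K(\xx-\overline{\xx}_K)\,d\xx=0$. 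For the source defect I would split $\int_\Omega f(\Pi_\dd^h z_h-z)=\int_\Omega(f-\pi_{\mm^h}f)(\Pi_\dd^h z_h-z)+\int_\Omega\pi_{\mm^h}f\,(\Pi_\dd^h z_h-z)$; the first term is bounded by $\norm{f-\pi_{\mm^h}f}_{L^2(\Omega)}\norm{\Pi_\dd^h z_h-z}_{L^2(\Omega)}\le C h\norm{f}_{H^1(\Omega)}\cdot C h\norm{e}_{L^2(\Omega)}$, which is exactly where $f\in H^1(\Omega)$ is indispensable, while the piecewise-constant second term recombines with the energy defects via the conformity measure $W_\dd(A(\mu)\nabla u)=O(h)$ (finite thanks to $A(\mu)\nabla u\in H^1$ from \eqref{h2reg}). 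Collecting these bounds gives
\[
\norm{e}_{L^2(\Omega)}^2\le C\,h^2\,\big(\norm{f}_{H^1(\Omega)}+\norm{u}_{H^2(\Omega)}\big)\,\norm{e}_{L^2(\Omega)},
\]
and dividing by $\norm{e}_{L^2(\Omega)}$ yields \eqref{superconvhmm}, with $C$ kept uniform in $h$ by the boundedness of the regularity factor $\theta$ in \eqref{reg}.

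The main obstacle is precisely the bookkeeping of the previous paragraph: showing that every contribution which is individually only first order is either paired with an independent first-order factor from the dual consistency or annihilated by the center-of-mass identity, so that no isolated $O(h)$ term survives. The delicate, scheme-specific point is the control of the HMM stabilisation in \eqref{NablaD}: I would need to prove that $R_K$ applied to the interpolant of a smooth function is genuinely $O(h^2)$ and that the isomorphism $\llm_K$ (equivalently the SPD matrix $\mathbb{B}_K$) preserves this order uniformly over the cells under $\theta$. Equally careful is verifying that the dual estimate really carries the factor $\norm{e}_{L^2(\Omega)}$ --- rather than $\norm{z}_{H^2(\Omega)}$ alone --- so that it can legitimately be divided out at the end, and that the conformity defect $W_\dd(A(\mu)\nabla u)$ attains the full order $h$ for this particular gradient reconstruction.
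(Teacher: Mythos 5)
The paper does not actually prove this statement: it is imported wholesale from the literature (the theorem is explicitly labelled ``Theorem 4.7~\cite{supconv2}''), and the only argument the authors add is the one-line reduction converting the form proved in~\cite{supconv2} into the form stated here, namely the bound $\norm{\pi_{\mm^h}u(\mu)-u_{\mathcal{P}}(\mu)}_{L^2(\Omega)}\leq Ch^2\norm{u}_{H^2(\Omega)}$ obtained from Lemma 7.5 of~\cite{supconv2} on the approximation of $H^2$ functions by affine functions --- a lemma that rests on the same center-of-mass cancellation $\int_K(\xx-\overline{\xx}_K)\,d\xx=0$ you invoke. The fair comparison is therefore with the proof in the cited source, and against that benchmark your sketch is essentially a faithful reconstruction: the super-convergence of HMM/hMFD schemes is indeed established there by an Aubin--Nitsche-type duality argument transported into the gradient-discretisation framework, with the very ingredients you list --- a dual solution with right-hand side $e$, elliptic regularity \eqref{h2reg} giving $\norm{z}_{H^2(\Omega)}\lesssim\norm{e}_{L^2(\Omega)}$ so the factor $\norm{e}_{L^2(\Omega)}$ can be divided out, first-order consistency and limit-conformity defects paired two at a time so every surviving term is $O(h^2)\norm{e}_{L^2(\Omega)}$, the second-order bound on the stabilisation residual $R_K$ applied to the interpolant of a smooth function (uniform over cells via the regularity factor $\theta$ of \eqref{reg} and the spectral bounds on $\mathbb{B}_K$), the splitting $f=(f-\pi_{\mm^h}f)+\pi_{\mm^h}f$ as the precise point where $f\in H^1(\Omega)$ is needed, and the choice $\xx_K=\overline{\xx}_K$ annihilating the otherwise isolated first-order term. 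The two obstacles you flag at the end (that $R_K(P_\dd u)$ is genuinely $O(h^2)$ uniformly under $\llm_K$, and that the dual energy estimate carries $\norm{e}_{L^2(\Omega)}$ rather than a raw $\norm{z}_{H^2(\Omega)}$) are exactly the scheme-specific lemmas that~\cite{supconv2} supplies, so nothing in your outline would fail; in short, you have re-derived the strategy of the cited proof, which this paper deliberately outsources, and your sketch is consistent with it.
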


To recover \eqref{superconvhmm} in the case $\xx_K=\overline{\xx}_K$, we used the Lemma 7.5 of~\cite{supconv2} on the approximation of $H_2$ functions by affine functions  to obtain
\begin{equation*}
  \norm{\pi_{\mm^h}u(\mu)-u_{\mathcal{P}}(\mu)}_{L^2(\omega)}\leq Ch^2\norm{u}_{H^2(\Omega)}.
  \end{equation*}

\begin{rmrk}

 We consider here $\norm{\cdot}_\dd$ as the discrete semi norm of $H^1$ so as not to make notations too cumbersome. The usual discrete semi-norm for $H^1$ is defined by
\begin{equation}
    \forall v \in \mathcal{T},\ |v|_{\ttens,2}^2= \underset{K \in \mm}{\sum} \underset{\sigma \in \ff_K}{\sum} |\sigma| d_{K,\sigma}\left\rvert\frac{v_{\sigma}-v_K}{d_{K,\sigma}}\right\rvert^2. \label{discretenorme}
\end{equation}
Under some conditions on the regularity of the mesh, this norm and $\norm{\nabla_\dd \cdot}_{L^2(\Omega)^d}$ are equivalent (Lemma 13.11~\cite{cindy}).
\end{rmrk}

In the next section, we recall the offline and the online parts of the two-grids algorithm. 

\section{The Non Intrusive Reduced Basis method (NIRB)}
\label{sect3}
This section recalls the main steps of the two-grids method algorithm~\cite{madaychakir, NIRB1}.\\ 

\noindent Let $u_h(\mu)$ refer to the hMFD solution on a fine polytopal mesh $\mathcal{T}_h$, with cells $\mathcal{M}_h$ and respectively $u_H(\mu)$ the one on a coarse mesh $\mathcal{T}_H$, with the cells $\mathcal{M}_H$.\\

\noindent We briefly recall the NIRB method. Points 1 and 2 are in the offline part, and the others are done online. 
\begin{enumerate}
    \item Several snapshots $\{\uu_{h}(\mu_i)\}_{i \in \{1,\dots N\}}$ are computed with the hMFD scheme $\eqref{variat}$, where $\mu_i\in \mathcal{G}\quad \forall i=1,\cdots,N$. The space generated by the snapshots is named $X_h^N = Span\{ u_h(\mu_1 ), \dots ,  u_h(\mu_N)\}$.
    \item We generate the basis functions $(\Phi_i^h)_{i=1,\cdots,N}$ with the following steps:
    \begin{itemize}
    \item A Gram-Schmidt procedure is used, which involves $L^2$ orthonormalization of the reconstruction functions. 
    \item This procedure is also completed by the following eigenvalue problem:
    \begin{numcases}
      \strut \textrm{Find } \Phi^h \in X_h^N, \textrm{ and } \lambda \in \mathbb{R} \textrm{ such that: }    \nonumber\\
        \forall \vv \in X_h^N, \int_{\Omega} \nabla_\dd^h \Phi^h \cdot \nabla_\dd^h \vv \ d\xx= \lambda \int_{\Omega} \Pi_\dd^h \Phi^h \cdot \Pi_\dd^h \vv \ d\xx, \label{orthoTPFA} 
    \end{numcases} 
    
    where $\nabla_\dd^h$ and $\Pi_\dd^h$ are respectively the discrete gradient and the discrete reconstruction operators as in the definition of the HMM GD (\eqref{PiD}, \eqref{NablaD}). We get an increasing sequence of eigenvalues $\lambda_i$, and orthogonal eigenfunctions $(\Pi_\dd^h\Phi_i^h)_{i=1,\cdots,N}$, orthonormalized in $L^2(\Omega)$ and orthogonalized in $H^1(\Omega)$, such that ($\Phi_i^h)_{i=1,\cdots,N}$ defines a new basis of the space $X_h^N$.
    \end{itemize}
    \item We solve the hMFD problem \eqref{variat} on the coarse mesh $\mathcal{T}_H$ for a new parameter $\mu \in \mathcal{G}$. Let us denote by $u_H(\mu)$ the solution.
    \item We then introduce $\alpha_i^H(\mu)=\int_{\Omega}\Pi_\dd^H \uu_H(\mu) \cdot \Pi_\dd^h \Phi_i^h\ d\xx$. The approximation used in the two-grids method is $\uu_{Hh}^N(\mu)=\overset{N}{\underset{i=1}{\sum}}\alpha_i^H(\mu) \Pi_\dd^h \Phi_i^h.$
    \end{enumerate}
    In the next section, we detail how to obtain the classical finite elements estimate in $\mathcal{O}(h)$ on the NIRB algorithm, when the snapshots are computed with the hMFD GD using a polytopal mesh.
    \section{NIRB error estimate}
    \label{sect4}
   In this section, we consider $\xx_K=\overline{\xx}_K$ which is the case with the hMFD scheme. Some other cases will be detailed in section \Rref{sect5}. \\
We now continue with the proof of theorem \Rref{th11}. 

\begin{proof}
In this proof, we will denote $A \lesssim B$ for $A \leq  CB$ with $C$ not depending on $h$ or $H$.

We use the triangle inequality on $\norm{\uu(\mu)-\uu_{Hh}^N(\mu)}_{\dd}$ to get
\begin{align}
\norm{\uu(\mu)-\uu_{Hh}^N(\mu)}_{\dd}&\leq \norm{\uu(\mu)-\Pi_\dd^h \uu_{h}(\mu)}_{\dd}+\norm{\Pi_\dd^h\uu_h(\mu)-\uu_{hh}^N(\mu)}_{\dd}+ \norm{\uu_{hh}^N(\mu)-\uu_{Hh}^N(\mu)}_{\dd} \nonumber \\
&=:T_1+T_2+T_3, 
\label{triangleinequality}
\end{align}
where $\uu_{hh}^N(\mu)=\overset{N}{\underset{i=1}{\sum}}\alpha_i^h(\mu) \Pi_\dd^h \Phi_i^h,$ and $\alpha_i^h(\mu)=\int_{\Omega}\Pi_\dd^h\uu_h(\mu) \cdot \Pi_\dd^h \Phi_i^h\ d\xx$.\\
\begin{itemize}
    \item The first term $T_1$ can be estimated using a classical result for finite volume schemes (Consequence of Proposition 13.16~\cite{cindy}) such that: 
\begin{equation}
    \norm{\uu(\mu)-\Pi_\dd^h \uu_{h}(\mu)}_{\dd} \lesssim  h\norm{u}_{H^2(\Omega)}.
    \label{VFestim}
\end{equation}
\item The best achievable error in the uniform sense of a fine solution projected into $X_h^N$ relies on the notion of Kolmogorov n-width (Theorem 20.1~\cite{nummod}). 
If $\mathcal{K}$ is a compact set in a Banach space $V$, the Kolmogorov n-width of $\mathcal{K}$ is \begin{equation}
    d_n(\mathcal{K})=\underset{\textrm{ dim} (V_n)\leq n}{\inf}\quad \underset{v\in \mathcal{K}}{\sup}\quad \underset{w\in V_n}{\min} \norm{v-w}_{V}.
\end{equation} 
Here we suppose the set of all the reconstructions of the solutions $\mathcal{S}=\{\Pi_\dd^h \uu_h(\mu),\mu \in \mathcal{G}\}$ has a low complexity which means for an accuracy $\varepsilon=\varepsilon(N)$ related to the Kolmogorov n-width of the manifold $\mathcal{S}$, there exists a set of parameters $\{\mu_1,\dots,\mu_N\} \in \mathcal{G}$, such that~\cite{cohen,madaychakir,NIRB1,greedy}
\begin{equation}
  T_2=\norm{\Pi_\dd^h \uu_h(\mu)-\overset{N}{\underset{i=1}{\sum}}\alpha_i^h(\mu)\Pi_\dd^h \Phi_i^h}_{\dd} \leq \varepsilon(N).\label{kolmotpfa}
  \end{equation}

\item Consider the term $T_3$ now. We will need the following proposition where the property of super-convergence for the hMFD scheme \eqref{superconvhmm} is used.\\

\begin{prpstn}
Let $\uu_H(\mu)$ be the solution of the hMFD on a polytopal mesh $\mathcal{T}_H$ with the unknowns on $\xx_K=\overline{\xx}_K$. Denote by $\uu(\mu)$ the exact solution of equation  $\eqref{varellip}$, and let $(\Phi_i^h)_{i=1,\cdots,N}$ be the basis functions of the NIRB algorithm, then there exists a constant $C=C(N)>0$ not depending on $H$ or $h$,and depending on $N$ such that
\begin{equation}
    \left\rvert\int_{\Omega}(\uu(\mu)-\Pi_\dd^H \uu_H(\mu))\cdot \Pi_\dd^h \Phi_i^h \ d\xx \right\rvert\lesssim ((\norm{\Phi_i}_{L^{\infty}(\Omega)}+C(N))\norm{u}_{H^2(\Omega)}+\norm{f}_{H^1(\Omega)})H^2. \label{EstimH2prop}
\end{equation}
\end{prpstn}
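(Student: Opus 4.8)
The plan is to insert the piecewise constant interpolant of the exact solution on the coarse mesh and split the integrand into three pieces, so that the super-convergence estimate can be used on the piece carrying the discrete solution $\Pi_\dd^H u_H(\mu)$, while an orthogonality argument recovers the extra power of $H$ on the piece carrying the genuine first-order approximation error. Writing $w_i:=\Pi_\dd^h\Phi_i^h$ (which satisfies $\norm{w_i}_{L^2(\Omega)}=1$ thanks to the $L^2$-orthonormalisation \eqref{orthoTPFA}), denoting by $\pi_{\mm^H}$ the coarse analogue of the projection $\pi_{\mm^h}$, and by $u_{\mathcal P}^H(\mu)$ the function that is piecewise constant on $\mm^H$ and equal to $u(\overline{\xx}_K;\mu)$ on each coarse cell $K$, I would decompose
\[
u(\mu)-\Pi_\dd^H u_H(\mu) = \big(u-\pi_{\mm^H}u\big) + \big(\pi_{\mm^H}u-u_{\mathcal P}^H\big) + \big(u_{\mathcal P}^H-\Pi_\dd^H u_H\big)
\]
(suppressing $\mu$), and bound the integral of each piece against $w_i$ separately.

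For the third piece I would apply the super-convergence estimate \eqref{superconvhmm} on the coarse mesh, giving $\norm{u_{\mathcal P}^H-\Pi_\dd^H u_H}_{L^2(\Omega)}\lesssim(\norm{f}_{H^1(\Omega)}+\norm{u}_{H^2(\Omega)})H^2$; Cauchy--Schwarz with $\norm{w_i}_{L^2(\Omega)}=1$ then produces the $\norm{f}_{H^1(\Omega)}H^2$ term and part of the $\norm{u}_{H^2(\Omega)}H^2$ term. For the second piece I would use the affine-approximation estimate recalled just after \eqref{superconvhmm} (Lemma 7.5 of \cite{supconv2}), namely $\norm{\pi_{\mm^H}u-u_{\mathcal P}^H}_{L^2(\Omega)}\lesssim H^2\norm{u}_{H^2(\Omega)}$; bounding the integral by $\norm{w_i}_{L^\infty(\Omega)}\norm{\pi_{\mm^H}u-u_{\mathcal P}^H}_{L^1(\Omega)}$ and then passing to the $L^2$ norm yields the $\norm{\Phi_i}_{L^\infty(\Omega)}\norm{u}_{H^2(\Omega)}H^2$ contribution (here $\norm{\Phi_i}_{L^\infty(\Omega)}$ is read as $\norm{\Pi_\dd^h\Phi_i^h}_{L^\infty(\Omega)}$).

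The first piece is the crux, because $\norm{u-\pi_{\mm^H}u}_{L^2(\Omega)}$ is only $O(H)$. Here I would exploit that $u-\pi_{\mm^H}u$ is $L^2$-orthogonal to every function that is piecewise constant on $\mm^H$, in particular to $\pi_{\mm^H}w_i$, so that
\[
\int_{\Omega}\big(u-\pi_{\mm^H}u\big)\,w_i\,d\xx = \int_{\Omega}\big(u-\pi_{\mm^H}u\big)\,\big(w_i-\pi_{\mm^H}w_i\big)\,d\xx .
\]
Cauchy--Schwarz then reduces matters to proving $\norm{w_i-\pi_{\mm^H}w_i}_{L^2(\Omega)}\lesssim C(N)H$, and this is the step where the finite dimension of the reduced space is essential. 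Writing $\Phi_i^h=\sum_j\beta_{ij}u_h(\mu_j)$ and introducing the smooth combination $\Phi_i:=\sum_j\beta_{ij}u(\mu_j)\in H^2(\Omega)$, I would combine the first-order $L^2$ convergence of the fine HMM reconstructions, $\norm{w_i-\Phi_i}_{L^2(\Omega)}\lesssim C(N)h$, with the piecewise constant approximation bound $\norm{\Phi_i-\pi_{\mm^H}\Phi_i}_{L^2(\Omega)}\lesssim H\norm{\Phi_i}_{H^1(\Omega)}\lesssim C(N)H$ and the $L^2$-contractivity of $\pi_{\mm^H}$. Since $h<H$, this gives $\norm{w_i-\pi_{\mm^H}w_i}_{L^2(\Omega)}\lesssim C(N)H$, hence a contribution $C(N)\norm{u}_{H^2(\Omega)}H^2$; collecting the three bounds yields the claimed estimate.

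I expect the main obstacle to be precisely this first piece: the orthogonality trick only buys the extra factor of $H$ if the coarse-mesh oscillation of $w_i$ can be controlled, and since $w_i$ is merely piecewise constant on the fine mesh this control is not available directly. It has to be routed through the smooth snapshot combination $\Phi_i$, whose $H^1$ norm and reconstruction error are bounded only by constants $C(N)$ that degrade with the conditioning of the reduced basis (through the coefficients $\beta_{ij}$ and the finite number $N$ of snapshots). Keeping track of this $N$-dependence, and checking that it genuinely decouples from $h$ and $H$, is where the argument requires care.
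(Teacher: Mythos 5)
Your proof is correct, and its overall architecture matches the paper's: insert the piecewise constant centroid interpolant of $u$ on the coarse mesh, dispatch the piece carrying $\Pi_\dd^H u_H(\mu)$ with the super-convergence estimate \eqref{superconvhmm} applied on $\mathcal{T}_H$, and then work to gain an extra power of $H$ on the remaining interpolation-error piece, whose $L^2$ norm is only $O(H)$. Where you genuinely diverge is in the mechanism for that extra power. The paper splits $u-\Pi_0^H u$ through the averaged Taylor polynomial $Q_K$ and the piecewise affine projection $\Pi_1^H$: the part $u-\Pi_1^H u$ is $O(H^2)$ outright, and for $\Pi_1^H u-\Pi_0^H u$ the center-of-mass identity $\int_K Q_K u\,d\xx=|K|\,Q_K u(\xx_K)$ makes the integral against the cell-wise constant $\Pi_0^H\Phi_i$ collapse to second order, while the residual cross term against $\Pi_\dd^h\Phi_i^h-\Pi_0^H\Phi_i$ is bounded as a product of two $O(H)$ factors. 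You instead use the $L^2$ cell-average projection $\pi_{\mm^H}$ and its orthogonality to coarse piecewise constants, reducing everything to the single product $\norm{u-\pi_{\mm^H}u}_{L^2}\,\norm{w_i-\pi_{\mm^H}w_i}_{L^2}$, with the coarse-scale oscillation of $w_i=\Pi_\dd^h\Phi_i^h$ controlled through the smooth snapshot combination $\Phi_i$ exactly as the paper controls $\norm{\Pi_\dd^h\Phi_i^h-\Pi_0^H\Phi_i}_{L^2}$ in \eqref{2emeterme}. Your route is arguably cleaner: it avoids introducing $Q_K$ and $\Pi_1^H$, and it can dispense with the $\norm{\Phi_i}_{L^\infty(\Omega)}$ factor (Cauchy--Schwarz with $\norm{w_i}_{L^2}=1$ suffices on your middle piece). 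Note that the center-of-mass hypothesis $\xx_K=\overline{\xx}_K$ still enters your argument, but only through Lemma 7.5 of the super-convergence reference (the bound $\norm{\pi_{\mm^H}u-u_{\mathcal P}^H}_{L^2(\Omega)}\lesssim H^2\norm{u}_{H^2(\Omega)}$), which is where it should; and your $C(N)$ has the same provenance and the same caveats as the paper's $\widetilde{C_2}(N)$.
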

\begin{proof}
Since $\mm_H$ is a partition of $\Omega$,  
\begin{align}
    \int_{\Omega}\Pi_\dd^H \uu_H(\mu) \cdot \Pi_\dd^h \Phi_i^h \ d\xx &=\underset{K\in \mathcal{M}_H}{\sum} \int_K \Pi_\dd^H u_H(\mu)  \cdot \Pi_\dd^h \Phi_{i}^h \ d\xx. \label{avecH}
\end{align}
To begin with, let $\Pi_0^H:\mathcal{C}(\Omega)\to L^\infty(\Omega)$ be the piecewise constant projection operator on $\mathcal{M}_H$ such that:
\begin{equation}
     \Pi_0^H \Phi(\xx)=\Psi(\xx_K),\quad \textrm{ on } K ,\quad \forall K \in \mathcal{M}_H,\quad  \forall \Psi \in \mathcal{C}(\Omega).\label{pi0}\\
\end{equation}
We use the triangle inequality on the left part of the inequality \eqref{EstimH2prop} and therefore,  
\begin{align}
\left\lvert \int_{\Omega}(\uu(\mu)-\Pi_\dd^H \uu_H(\mu))\cdot \Pi_\dd^h \Phi_i^h \ d\xx \right\rvert &\leq \left\rvert \int_{\Omega} (u(\mu)-\Pi_0^H u(\mu))\cdot \Pi_\dd^h \Phi_i^h \ d\xx \right\rvert + \left\rvert \int_{\Omega} (\Pi_0^H u(\mu)-\Pi_\dd^H u_H(\mu))\cdot\Pi_\dd^h \Phi_i^h \ d\xx \right\rvert,\nonumber \\
&=:T_{3,1} + T_{3,2}. \label{decompose1}
\end{align}

\begin{itemize}

    \item We first consider the term $T_{3,1}$, but beforehand the estimate of $T_{3,1}$ requires the use of a further operator which we now introduce. 
  Each cell $K\in \mm_H$ is star-shaped with respect to a ball $B_K$ centered in $\xx_K$ of radius $\rho=\underset{\sigma \in \ff_K}{\textrm{min }} d_{K,\sigma}$ (Lemma B.1~\cite{cindy}). We then use an averaged Taylor polynomial as in~\cite{FE} but simplified. Let us consider the following polynomial of $u(\mu)$ averaged over $B_K$:\\
\begin{equation}
    Q_K u(\xx;\mu)=\frac{1}{|B_K|}\int_{B_K} [u(\mathbf{y};\mu)+D^1 u(\mathbf{y};\mu)(\xx-\mathbf{y})] \ d\mathbf{y}.\\
    \label{defQ}
\end{equation}
This polynomial is of degree less or equal to $1$ in $\xx$.\\
Let us introduce $\Pi_1^H:H^1(\Omega) \cap \mathcal{C}(\Omega)\to \mathbb{R}$, the piecewise affine projection operator on $\mathcal{M}_H$ such that:
\begin{equation}
     \Pi_1^H \Psi=Q_K \Psi(\xx), \quad \textrm{ on } K, \quad \forall K \in \mathcal{M}_H, \quad \forall \Psi \in H^1(\Omega)\cap \mathcal{C}.\label{pi01}\\
\end{equation}
With the triangle inequality, we obtain \begin{align}
    T_{3,1} &\leq \left\rvert\int_{\Omega} (u(\mu)-\Pi_1^Hu(\mu))\cdot \Pi_\dd^h \Phi_i^h \ d\xx \right\rvert + \left\rvert\int_{\Omega} (\Pi_1^Hu(\mu)-\Pi_0^H u(\mu))\cdot\Pi_\dd^h \Phi_i^h \ d\xx \right\rvert,\nonumber \\
    &=: T_{3,1,1} + T_{3,1,2}. \label{decompose2}
\end{align}
 
  \begin{itemize}
       \item 
    Using the Cauchy-Schwarz inequality,
\begin{align}
    T_{3,1,1} & \leq\int_{\Omega} \left\rvert (u(\mu)-\Pi_1^Hu(\mu))\cdot \Pi_\dd^h \Phi_i^h \right\rvert\ d\xx , \nonumber \\ 
    &\leq \norm{\uu(\mu)-\Pi_1^Hu(\mu)}_{L^2(\Omega)}\norm{\Pi_{\dd}^h \Phi_i^h}_{L^2(\Omega)},  \nonumber \\ 
    \label{331e} &\leq \norm{\uu(\mu)-\Pi_1^Hu(\mu)}_{L^2(\Omega)}, \textrm{ since $\Pi_\dd^h\Phi_i^h\quad  \forall i=1,\cdots,N$ are normalized in } L^2.
    \end{align}
    
Let $K\in\mm_H$. As in Proposition 4.3.2~\cite{FE},
\begin{equation}
\underset{\xx\in \overline{K}}{\sup\ } |u(\xx;\mu)-Q_K u(\xx;\mu)|\lesssim H_K^{2-\frac{d}{2}} |u(\mu)|_{H^2(K)}.
\label{supestim}
\end{equation}
Since $K\subset B(\xx,H)$ for all $\xx \in K$, 
\begin{align}
    |K|&\leq |B(\xx_K,H)|=|B(0,1)|H_K^d. \label{K}
\end{align}
Thus, with the inequalities \eqref{K} and \eqref{supestim}, we get  \begin{equation}
    \underset{\xx\in \overline{K}}{\sup\ } |u(\xx;\mu)-Q_K u(\xx;\mu)|\lesssim H_K^{2}|K|^{-\frac{1}{2}} |u(\mu)|_{H^2(K)},
\end{equation}
taking the square and integrating over $K$, we obtain
\begin{equation}
    \int_K |u(\mu)-\Pi_1^H u(\mu)|^2 \ d\xx \lesssim H_K^{4} |u(\mu)|_{H^2(K)}^2,
\end{equation}
and summing over $K$ yields 
\begin{equation}
    \norm{u(\mu)-\Pi_1^H u(\mu)}_{L^2(\Omega)}\lesssim H^2 |u(\mu)|_{H^2(\Omega)}.
    \label{bramblehilbert}
\end{equation}
The inequality \eqref{bramblehilbert}, combined with \eqref{331e}, entails that
\begin{equation}
    T_{3,1,1}  \lesssim H^2 |u(\mu)|_{H^2(\Omega)}.
    \label{311estim}
\end{equation}

\item The term $T_{3,1,2}$ can be estimated using a continuous reconstruction of $\Phi_i^h$, denoted by $\Phi_i$ .\\
With the triangle inequality,
\begin{align}
  \left\rvert \int_{\Omega} (\Pi_1^H \uu(\mu)-\Pi_0^H u (\mu))\cdot \Pi_{\dd}^h \Phi_i^h \ d\xx\right\rvert&\leq \left\rvert \int_{\Omega}(\Pi_1^Hu(\mu)-\Pi_0^H u(\mu))( \Pi_{\dd}^h\Phi_i^h-\Pi_0^H\Phi_i) \ d\xx \right\rvert \nonumber \\
  &+ \left\rvert \int_{\Omega} (\Pi_1^Hu(\mu)-\Pi_0^H u(\mu))\cdot\Pi_0^H\Phi_i) \ d\xx \right\rvert. \label{inegalitétriangulairesurT312}
  \end{align}

Since $\xx_K$ is the center of mass, $\int_{K} \xx \ d\xx =|K|\xx_K$. Therefore,
\begin{equation}
    \int_{K}Q_K u(\xx;\mu)\ d\xx = |K| Q_K u (\xx_K; \mu).
    \label{centerofmass}
\end{equation}
From the inequality \eqref{supestim}, 
\begin{align}
    |Q_K u(\xx_K;\mu)-u(\xx_K;\mu)|\lesssim H_K^{2-\frac{d}{2}} |u(\mu)|_{H^2(K)}.
    \label{estimsurxk}
\end{align}

Thus, since $\Pi_0^H\Phi_i$ is constant on each cell $ K \in \mm_H$, and $|K|\lesssim H_K^d$ \eqref{K},
\begin{align}
    \left\rvert \int_{\Omega} (\Pi_1^H u(\mu)-\Pi_0^Hu(\mu)) \cdot \Pi_0^H\Phi_i \ d\xx \right\rvert &= \left\rvert \underset{K\in \mathcal{M}_H}{\sum}  \int_K (Q_K u(\xx;\mu)-u(\xx_K;\mu)) \cdot \Pi_0^H \Phi_i \ d\xx  \right\rvert, \nonumber \\
    &\leq \underset{K\in \mathcal{M}_H}{\sum}  \left\rvert  \Phi_i(x_K) \int_{K} Q_Ku(\xx;\mu) -u(\xx_K;\mu)  \ d\xx  \right\rvert, \nonumber \\
    &\leq \underset{K\in \mathcal{M}_H}{\sum} |K| \left\rvert  \Phi_i(x_K) (Q_Ku(\xx_K;\mu) -u(\xx_K;\mu))  \right\rvert,   \textrm{ from \eqref{centerofmass}},\nonumber \\
    &\leq  \norm{\Phi_i}_{L^{\infty}(\Omega)}\underset{K\in \mathcal{M}_H}{\sum}  |K|\left\rvert Q_Ku(\xx_K;\mu)  - u(\xx_K;\mu) \right\rvert, \nonumber  \\
&\lesssim \norm{\Phi_i}_{L^{\infty}(\Omega)} \underset{K\in \mathcal{M}_H}{\sum}  |K|H_K^{2-\frac{d}{2}}|u(\mu)|_{H^2(K)} \textrm{ from \eqref{estimsurxk}}, \nonumber \\
&\lesssim \norm{\Phi_i}_{L^{\infty}(\Omega)} \underset{K\in \mathcal{M}_H}{\sum}  H_K^{2+\frac{d}{2}}|u(\mu)|_{H^2(K)}. \label{test}
\end{align}
Since Card$(\mm_H)\simeq H^{-d} $, using the Cauchy-Schwarz inequality, the inequality \eqref{test} becomes 

\begin{align}
   \left\rvert \int_{\Omega} (\Pi_1^H u(\mu)-\Pi_0^Hu(\mu)) \cdot \Pi_0^H\Phi_i \ d\xx \right\rvert &\lesssim \norm{\Phi_i}_{L^{\infty}}H^{2} (\underset{K\in \mathcal{M}_H}{\sum} |u(\mu)|_{H^2(K)}^2)^{\frac{1}{2}}, \nonumber \\
    &=\norm{\Phi_i}_{L^{\infty}}|u(\mu)|_{H^2(\Omega)}H^2 \label{inter},
\end{align}
 
which implies that there exists a constant $\widetilde{C_1}>0$ not depending on $h$ or $H$ such that  \eqref{inegalitétriangulairesurT312} becomes 
  \begin{equation}
      T_{3,1,2} \leq  \int_{\Omega} \left\rvert(\Pi_1^H u(\mu)-\Pi_0^H u(\mu))( \Pi_{\dd}^h\Phi_i^h-\Pi_0^H\Phi_i) \right\rvert d\xx  + \widetilde{C_1}\norm{\Phi_i}_{L^{\infty}}|u(\mu)|_{H^2(\Omega)}H^2.
      \label{derniereestimsurT312}
  \end{equation}
  
From the Cauchy-Schwarz inequality and the inequality \eqref{derniereestimsurT312},
\begin{align}
        T_{3,1,2}
        &\leq \norm{\Pi_1^H\uu(\mu)-\Pi_0^H u(\mu)}_{L^2(\Omega)}\norm{\Pi_{\dd}^h \Phi_i^h-\Pi_0^H\Phi_i}_{L^2(\Omega)} + \widetilde{C_1}\norm{\Phi_i}_{L^{\infty}}|u(\mu)|_{H^2(\Omega)}H^2.        \label{enoH2}
\end{align}
From Bramble-Hilbert's Lemma (see~\cite{FE}), we deduce that
\begin{equation}
    \norm{u(\mu)-\Pi_0^H u(\mu)}_{L^2(\Omega)}\lesssim H \norm{u(\mu)}_{H^2(\Omega)}.\label{pi0eq}
\end{equation}
For the first term in the right-hand side of \eqref{enoH2}, from \eqref{bramblehilbert}-\eqref{pi0eq} and the triangle inequality,  
\begin{align}
    \norm{\Pi_1^H \uu(\mu)- \Pi_0^H u (\mu)}_{L^2(\Omega)} &\leq \norm{\Pi_1^H \uu(\mu)- u (\mu)}_{L^2(\Omega)} + \norm{u(\mu)- \Pi_0^H u (\mu)}_{L^2(\Omega)}, \nonumber \\
    &\lesssim H\norm{u}_{H^2(\Omega)}, \textrm{ neglecting the estimate in $H^2$}, \label{1erterme}
\end{align} 
and the inequality \eqref{pi0eq} and the classical finite volume estimate as for \eqref{VFestim} ($\Pi_{\dd}^h \phi_i^h$ being a linear combination of the family $(\Pi_{\dd}^h\uu_j^h)_{j=1}^N,\ \forall i=1,\cdots,N$) implies that there exists $\widetilde{C_2}=\widetilde{C_2}(N)>0$ not depending of $H$ or $h$ but depending on $N$ such that
    \begin{align}
      \norm{\Pi_{\dd}^h \Phi_i^h-\Pi_0^H \Phi_i}_{L^2(\Omega)} & \leq \norm{\Pi_{\dd}^h \Phi_i^h-\Phi_i}_{L^2(\Omega)}+\norm{\Phi_i-\Pi^H_0 \Phi_i}_{L^2(\Omega)}, \nonumber \\
      &\leq \widetilde{C_2}(N) H, \textrm{ neglecting the estimate in $h$}. \label{2emeterme}
\end{align}
From \eqref{1erterme}-\eqref{2emeterme}, we deduce that each $L^2$ term is in $\mathcal{O}(H)$ in the product of the right-hand side of \eqref{enoH2}. Hence the equation \eqref{inegalitétriangulairesurT312} yields to 
\begin{equation}
\label{premier2terme}
     T_{3,1,2}=\left\rvert \int_{\Omega} (\Pi_1^H \uu(\mu)-\Pi_0^H u(\mu))\cdot \Pi_{\dd}^h \Phi_i^h \ d\xx \right\rvert \lesssim (\widetilde{C_1}\norm{\Phi_i}_{L^{\infty}(\Omega)}+\widetilde{C_2}(N)) \norm{u}_{H^2(\Omega)}H^2.
\end{equation}
   \end{itemize}

\item We now proceed with the estimate on $T_{3,2}:$\\
With the super-convergence property on the hMFD scheme \eqref{superconvhmm}, and with the normalization of $\Pi_{\dd}^h \Phi_i^h $ in $L^2(\Omega)$
\begin{align}
    \left\rvert \int_{\Omega} (\Pi_{\dd}^H \uu_H(\mu)-\Pi_0^H \uu(\mu)) \cdot \Pi_{\dd}^h \Phi_i^h \ d\xx \right\rvert & \leq  \int_{\Omega} \left \rvert (\Pi_{\dd}^H \uu_H(\mu)-\Pi_0^H \uu(\mu)) \cdot \Pi_{\dd}^h \Phi_i^h\right\rvert \ d\xx , \nonumber \\
    &\leq \norm{\Pi_{\dd}^H \uu_H(\mu)-\Pi_0^H \uu(\mu)}_{L^2(\Omega)}\norm{\Pi_{\dd}^h \Phi_i^h}_{L^2(\Omega)}, \nonumber \\
    &\lesssim (\norm{f}_{H^1(\Omega)} +\norm{u}_{H^2(\Omega)})H^2. \label{term2bis}
\end{align}
\end{itemize}

Combining the estimates \eqref{311estim}-\eqref{premier2terme}-\eqref{term2bis} with the inequalities \eqref{decompose1}-\eqref{decompose2}, this results in the inequality \eqref{EstimH2prop}.
\end{proof}

\noindent We now consider the third term $T_3=\norm{\uu_{hh}^N(\mu)-\uu_{Hh}^N(\mu)}_{\dd}$.
\begin{align}
    T_3&= \norm{\overset{N}{\underset{i=1}{\sum}}\alpha_i^h(\mu) \Pi_{\dd}^h \Phi_i^h-\overset{N}{\underset{i=1}{\sum}}\alpha_i^H(\mu) \Pi_{\dd}^h \Phi_i^h}_{\dd}, \nonumber \\ 
    &\leq \overset{N}{\underset{i=1}{\sum}} \left\rvert \alpha_i^h(\mu)-\alpha_i^H(\mu)\right\rvert \norm{\Pi_{\dd}^h \Phi_i^h}_{\dd}, \nonumber \\
     &= \overset{N}{\underset{i=1}{\sum}} \left\rvert(\Pi_{\dd}^h \uu_h(\mu)-\Pi_{\dd}^H \uu_H(\mu),\Pi_{\dd}^h \Phi_i^h)_{L^2}\right\rvert \norm{\Pi_{\dd}^h \Phi_i^h}_{\dd}. \label{on sensert}
\end{align}
From \eqref{orthoTPFA}, we get that 
\begin{align}
   \norm{\Pi_\dd^h\Phi_i^h}_{\dd}^2\ &=\ \int_{\Omega} |\nabla_\dd \Phi_i^h|^2 \ d\xx \ = \ \lambda_i \norm{\Pi_\dd \Phi_i^h}_{L^2(\Omega)}^2 \leq  \underset{i=1,\cdots,N}{\max}(\lambda_i)= \lambda_N.\label{lambdamaxortho}
\end{align}
Therefore we obtain from \eqref{on sensert} and \eqref{lambdamaxortho},
\begin{equation}
    T_3\leq \sqrt{\lambda_N} \overset{N}{\underset{i=1}{\sum}} \left\rvert (\Pi_{\dd}^h \uu_h(\mu)-\Pi_{\dd}^H \uu_H(\mu),\Pi_{\dd}^h \Phi_i^h)_{L^2}\right\rvert. \label{on sensert bis}\\
\end{equation}
Using the triangle inequality in the right-hand side of \eqref{on sensert bis}, 
\begin{equation}
    T_3\leq \sqrt{\lambda_N} \overset{N}{\underset{i=1}{\sum}} \left\rvert(\Pi_{\dd}^h \uu_h(\mu)-u(\mu),\Pi_\dd^h \Phi_i^h)\right\rvert+\left\rvert(u(\mu)-\Pi_{\dd}^H \uu_H(\mu),\Pi_{\dd}^h \Phi_i^h)\right\rvert. \label{on sensert bisbis}\\
\end{equation}
From Proposition 1, with the estimate \eqref{EstimH2prop} applied to $\mm_h$ and $\mm_H$, neglecting the estimate in $\mathcal{O}(h^2)$
\begin{equation}
    T_3 \lesssim \sqrt{\lambda_N}N((\norm{\Phi_i}_{L^{\infty}(\Omega)}+C(N))\norm{u}_{H^2(\Omega)}+\norm{f}_{H^1(\Omega)}) H^2.
    \label{T3}
\end{equation}
\end{itemize}
The conclusion follows combining the estimates on $T_1, T_2$ and $T_3$ (estimates \eqref{VFestim},\eqref{kolmotpfa} and \eqref{T3}).
\begin{align}
\norm{u(\mu)-u_{Hh}^N(\mu)}_\dd &= \norm{\uu(\mu)-\sum_{i=1}^N  \alpha_i^H(\mu) \Pi_{\dd}^h \Phi^h_i}_{\dd}, \nonumber \\
& \leq \varepsilon(N) + C_1 h + C_2(N) H^2 \sim \mathcal{O}(h) \textrm{ if }h \sim H^2.
\end{align}
\end{proof}
\section{Results on other FV schemes}
\label{sect5}
In this section, we consider the case where $\xx_K$ is not the center of mass, as it is the case for some FV schemes. Therefore the left hand side of the inequality \eqref{test} cannot be estimated using equation \eqref{centerofmass}.
The unknowns $\xx_K$ are not necessarily the centers of mass of the cells neither with HMM methods nor with the Two-Point Flux Approximation (TPFA) scheme~\cite{boyer,tpfascheme}. Under the following superadmissibility condition \begin{equation}
    \forall K\in \mm_H, \ \sigma \in \ff_K: \ \nn_{K,\sigma}=\frac{\overline{\xx}_{\sigma}-\xx_K}{d_{K,\sigma}},
\end{equation} the TPFA scheme is a member of the the HMM family schemes (~\cite{cindy} section 13.3 ,\ ~\cite{jd} section 5.3) with the choice $\mathcal{L}_K=Id$. This leads to take $\xx_K$ as the circumcenters of the cells with 2D  triangular meshes. Theorem 1.1 holds in 2D on uniform rectangles with TPFA since the superadmissibility condition is satisfied in this case where $\xx_K$ is the centre of mass of the cells. The TPFA scheme is rather simple to implement, and therefore we will present in the last section  numerical results with a TPFA solver.
 We will use the definition of a local grouping of the cells as in~\cite{supconv2} (Definition 5.1).
We will extend the Theorem 1.1 in the case where such groupings of cells exist. 

\begin{dfntn} (Local grouping of the cells). Let $\mathcal{T}_H$ be a polytopal mesh of $\Omega$. A local grouping of the cells of $\mathcal{T}_H$ is a partition $\mathfrak{G}$ of $\mm_H$, such that for each $G\in \mathfrak{G}$, letting $U_G:= \underset{K \in G}{\cup} K$, there exists a ball $B_G\subset U_G$ such that $U_G$ is star-shaped with respect to $B_G$. This implies that for all $\xx \in U_G$ and all $\mathbf{y} \in B_G$, the line segment $[\xx,\mathbf{y}]$ is included in $U_G$. We then deﬁne the regularity factor of $\mathfrak{G}$ \begin{equation}
    \mu_G:= \quad \underset{G\in \mathfrak{G}}{\max }\quad \textrm{Card}(G)\quad +\quad \underset{G\in \mathfrak{G}}{\max}\quad \underset{K\in G}{\max}\quad \frac{H_K}{\textrm{diam}(B_G)},
\end{equation}

and, with $e_K=\overline{\xx}_K-\xx_K$, and
\begin{equation}
  e_{G}:=\frac{1}{|U_G|}\ \underset{K\in G}{\sum}\ |K|\ \eee_K ,\quad \forall G \in \mathfrak{G},
\end{equation}

\begin{equation}
    e_{\mathfrak{G}}:= \underset{G \in \mathfrak{G}}{\max} \quad \left\rvert \eee_G \right\rvert.
\end{equation}
\end{dfntn}
\noindent
Note that we are interested in situations where $|\mathbf{e}_{G}|=\left\rvert\frac{1}{|U_G|}\sum_{K\in G} |K|\ \mathbf{e}_K\right\rvert$ is much smaller than $|\eee_K|\quad \forall K\in G$.
The aim of this section is to estimate the left hand side of the inequality $\eqref{test}$ in $\mathcal{O}(H^2)$ using a local grouping of the cells. The rest of the proof remains unchanged. 

We will need the following Theorem of super-convergence for HMM schemes with local grouping (Theorem 5.4~\cite{supconv2}).

\begin{thrm}[Super-convergence for HMM schemes with local grouping (Theorem 5.4 ~\cite{supconv2})]
Let $f\in H^1(\Omega)$, and $u(\mu)$ be the solution of $\eqref{varellip}$ under assumption \eqref{h2reg}. Let $\mathcal{T}_h$ be a polytopal mesh, and $\mathcal{D}$ be an HMM gradient discretisation on $\mathcal{T}_h$ and $e_{\mathfrak{G}}$ be a local grouping, and let $u_{h}(\mu)$ be the solution of the corresponding GD. Then, considering $u_{\mathcal{P}}(\mu)$ as the piecewise constant function on $\mathcal{M}_h$ equal to $u(\xx_K;\mu)$ on $K\in \mm$, there exists $C$ not depending on $H$ or $h$ such that 
\begin{equation}
    \norm{\Pi_{\dd}^hu_{h}(\mu)-u_{\mathcal{P}}(\mu)}_{L^2(\Omega)}\leq C \norm{f}_{H^1(\Omega)}(h^2+e_{\mathfrak{G}}) .
    \label{superconvhmm2}
\end{equation}
\end{thrm}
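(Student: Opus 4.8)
The statement being a super-convergence estimate, the plan is to run an Aubin--Nitsche (duality) argument inside the gradient discretisation framework and to let the local grouping $\mathfrak{G}$ supply the extra order that the center-of-mass identity $\eqref{centerofmass}$ supplied in the center-of-mass case. I would start from the natural interpolant $I_\dd u \in X_{\dd,0}$ carrying the cell values $u(\xx_K;\mu)$ and the face values $u(\overline{\xx}_\sigma;\mu)$ (admissible since $u\in H_0^1(\Omega)\cap H^2(\Omega)\hookrightarrow\mathcal{C}(\overline{\Omega})$ for $d\leq 3$ and $u=0$ on $\pt\Omega$), so that $\Pi_\dd^h I_\dd u = u_{\mathcal{P}}(\mu)$ and the quantity to bound becomes $\norm{\Pi_\dd^h(u_h(\mu)-I_\dd u)}_{L^2(\Omega)}$. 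The key algebraic feature I would exploit is that the HMM reconstruction is exact on affine functions: if $w$ is affine then $R_K(I_\dd w)=0$ and $\nabla_\dd^h I_\dd w = \nabla w$, directly from the definitions of $R_{K,\sigma}$ and $\nabla_K$. Since coercivity alone gives only $\norm{\Pi_\dd^h(u_h-I_\dd u)}_{L^2(\Omega)}\lesssim\norm{\nabla_\dd^h(u_h-I_\dd u)}_{L^2(\Omega)^d}=\mathcal{O}(h)$, the gain must come from duality.

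For an arbitrary $\psi\in L^2(\Omega)$ I would solve the adjoint problem $-\textrm{div}(A(\mu)\nabla\varphi)=\psi$ with homogeneous Dirichlet conditions, which by $\eqref{h2reg}$ obeys $\norm{\varphi}_{H^2(\Omega)}\lesssim\norm{\psi}_{L^2(\Omega)}$, and let $\varphi_h$ be its GD solution. Testing the discrete adjoint equation with $u_h-I_\dd u$ and the discrete primal equation $\eqref{variat}$ with $\varphi_h$, and using the symmetry of $A(\mu)$, collapses the duality pairing into a single consistency defect,
\begin{equation*}
\int_\Omega\Pi_\dd^h(u_h(\mu)-I_\dd u)\,\psi\;d\xx=\int_\Omega f\,\Pi_\dd^h\varphi_h\;d\xx-\int_\Omega A(\mu)\nabla_\dd^h\varphi_h\cdot\nabla_\dd^h I_\dd u\;d\xx .
\end{equation*}
Subtracting and adding the continuous identity $\int_\Omega f\varphi=\int_\Omega A(\mu)\nabla u\cdot\nabla\varphi$ splits the right-hand side into a source defect $\int_\Omega f(\Pi_\dd^h\varphi_h-\varphi)\,d\xx$ and a gradient-consistency defect comparing $\nabla_\dd^h I_\dd u$ and $\nabla_\dd^h\varphi_h$ with $\nabla u$ and $\nabla\varphi$.

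The heart of the proof, and the step I expect to be the main obstacle, is upgrading these two defects from the naive $\mathcal{O}(h)$ to $\mathcal{O}(h^2+e_{\mathfrak{G}})$. For the source defect, since $\Pi_\dd^h\varphi_h$ is piecewise constant one may test $f$ through its cell average $\pi_{\mm^h}f$ at no cost: the residual then pairs $f-\pi_{\mm^h}f$, controlled by $\norm{f-\pi_{\mm^h}f}_{L^2(\Omega)}\lesssim h\norm{f}_{H^1(\Omega)}$ (this is exactly where the hypothesis $f\in H^1(\Omega)$ enters), against the $\mathcal{O}(h)$ dual reconstruction error, while the piecewise-constant part pairs $\pi_{\mm^h}f$ against the cell-average super-convergence $\Pi_\dd^h\varphi_h-\pi_{\mm^h}\varphi=\mathcal{O}(h^2)$ of the dual. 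For the gradient-consistency defect, the affine-exactness recalled above means that only the second-order Taylor remainders of $u$ and $\varphi$ contribute; replacing $u$ and $\varphi$ group-wise by their averaged-Taylor polynomials on the star-shaped domains $U_G$ and invoking a Bramble--Hilbert estimate on $U_G$ (quantified by the regularity factor $\mu_G$ and by $\norm{u}_{H^2(\Omega)}$, $\norm{\varphi}_{H^2(\Omega)}$) produces the genuine $\mathcal{O}(h^2)$ part. The surviving first-order contributions no longer cancel cell by cell, because $\xx_K\neq\overline{\xx}_K$, but summed over $K\in G$ they reassemble precisely into the weighted displacement $\frac{1}{|U_G|}\sum_{K\in G}|K|\,\eee_K=\eee_G$, whose supremum over groups is $e_{\mathfrak{G}}$; the delicate bookkeeping is to verify that every non-cancelling first-order term is genuinely of this regrouped form rather than a stray $\mathcal{O}(h)$ remainder, which is exactly what the definitions of $U_G$, $B_G$ and $\mu_G$ are engineered to guarantee.

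Finally I would collect the regularity factors. The $H^2$ norm of the dual is absorbed through $\norm{\varphi}_{H^2(\Omega)}\lesssim\norm{\psi}_{L^2(\Omega)}$, while the primal regularity $\eqref{h2reg}$ gives $\norm{u}_{H^2(\Omega)}\lesssim\norm{f}_{L^2(\Omega)}\leq\norm{f}_{H^1(\Omega)}$, so that both defects are bounded by $C\norm{f}_{H^1(\Omega)}(h^2+e_{\mathfrak{G}})\,\norm{\psi}_{L^2(\Omega)}$. Taking the supremum over $\psi$ with $\norm{\psi}_{L^2(\Omega)}=1$ then yields $\norm{\Pi_\dd^h u_h(\mu)-u_{\mathcal{P}}(\mu)}_{L^2(\Omega)}\lesssim\norm{f}_{H^1(\Omega)}(h^2+e_{\mathfrak{G}})$, which is $\eqref{superconvhmm2}$.
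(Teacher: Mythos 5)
The first thing to note is that the paper does not prove this statement: it is imported verbatim as Theorem 5.4 of \cite{supconv2} and used as a black box in the proof of the subsequent NIRB estimate with local grouping. There is therefore no in-paper proof to measure your argument against; what can be said is that your sketch follows the broad strategy of the proof in the cited reference (a duality argument in the gradient-discretisation framework, second-order consistency obtained from affine exactness and averaged Taylor polynomials on the star-shaped patches $U_G$, and a regrouping of the non-cancelling first-order displacement terms into $\eee_G$). Your duality identity itself is correct, as is the observation that $\Pi_\dd^h I_\dd u = u_{\mathcal{P}}(\mu)$ and that the HMM gradient reconstruction is exact on affines.

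As a proof, however, the proposal has two genuine gaps. First, your treatment of the source defect $\int_\Omega f(\Pi_\dd^h\varphi_h-\varphi)\,d\xx$ is circular: after splitting off $f-\pi_{\mm^h}f$, you bound the remaining piece by invoking ``the cell-average super-convergence $\Pi_\dd^h\varphi_h-\pi_{\mm^h}\varphi=\mathcal{O}(h^2)$ of the dual'', which is precisely the statement being proven, applied to the adjoint problem --- and it cannot even be applied there, since the adjoint datum $\psi$ is only in $L^2(\Omega)$ (it is ultimately taken to be the error itself), whereas the theorem requires an $H^1$ right-hand side. The way out, used in \cite{supconv2}, is to work with the \emph{continuous} dual solution and an interpolant of it rather than with the discrete dual solution $\varphi_h$, so that only consistency and limit-conformity defects of $\varphi$, each controlled by $\norm{\varphi}_{H^2(\Omega)}\lesssim\norm{\psi}_{L^2(\Omega)}$, appear. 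Second, the entire quantitative content of the theorem --- that the surviving first-order Taylor contributions, summed over a group $G$, reassemble into $\frac{1}{|U_G|}\sum_{K\in G}|K|\,\eee_K$ up to $\mathcal{O}(h^2)$ corrections controlled by $\mu_G$ --- is deferred as ``delicate bookkeeping''. That regrouping, carried out in \cite{supconv2} via its Lemma 7.6 (the same lemma the present paper applies three times in its own proof of the NIRB estimate with local grouping), \emph{is} the proof; without it your text establishes the plan but not the estimate \eqref{superconvhmm2}.
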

\begin{thrm}[NIRB error estimate with local grouping] Let $u_{hH}^N(\mu)$ be the reduced solution projected on the fine mesh and generated with the hMFD solver with the unknowns defined on $\xx_k$ such that $e_{\mathfrak{G}}$ is in $\mathcal{O}(H^2)$ on the coarse mesh, and $u(\mu)$ be the exact solution of \eqref{varellip} under assumption \eqref{h2reg}, then the following estimate holds \\
\begin{equation}
    \norm{u(\mu) - u_{hH}^N(\mu)}_{\dd}\leq \varepsilon(N) +C_1 h + C_2(N) H^2, 
   \label{estimationNIRBBB}
\end{equation}
where $C_1$ and $C_2$ are constants independent of $h$ and $H$,$C_2$ depends on $N$, the number of functions in the basis, and $\norm{\cdot}_{\dd}$ is the discrete norm introduced in section \Rref{sect2}, and $\varepsilon$ depends of the Kolmogorov n-width. If $H$ is such as $H^2\sim h$, and $\varepsilon(N)$ small enough, it results in an error estimate in $\mathcal{O}(h)$.

\end{thrm}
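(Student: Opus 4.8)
The plan is to follow the proof of Theorem \Rref{th11} verbatim, since every step except one carries over unchanged, as anticipated in the text preceding this statement. I would again start from the triangle inequality \eqref{triangleinequality}, splitting $\norm{u(\mu)-u_{Hh}^N(\mu)}_\dd$ into $T_1+T_2+T_3$. The term $T_1$ is controlled by the classical finite volume estimate \eqref{VFestim}, whose proof does not involve the location of $\xx_K$, so that $T_1\lesssim h\,\norm{u}_{H^2(\Omega)}$ still holds; and $T_2$ is bounded by the Kolmogorov $n$-width inequality \eqref{kolmotpfa}, giving $T_2\le\varepsilon(N)$ as before. The whole argument therefore reduces to re-establishing, in the present setting where $\xx_K\ne\overline{\xx}_K$, the proposition that yields \eqref{EstimH2prop}, after which $T_3$ is bounded exactly as in \eqref{T3}.

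For that proposition I would reuse the decompositions \eqref{decompose1} and \eqref{decompose2}. The term $T_{3,1,1}$ is still bounded by $\mathcal{O}(H^2)$ through the averaged Taylor polynomial and the Bramble--Hilbert argument, exactly as in \eqref{311estim}. The term $T_{3,2}$, which in Theorem \Rref{th11} relied on the super-convergence bound \eqref{superconvhmm}, is now handled by the super-convergence theorem with local grouping \eqref{superconvhmm2}: here $\Pi_0^H u(\mu)$ is the piecewise constant function equal to $u(\xx_K;\mu)$ on each $K$, so it coincides with the function $u_{\mathcal{P}}$ of that theorem, and hence $\norm{\Pi_\dd^H u_H(\mu)-\Pi_0^H u(\mu)}_{L^2(\Omega)}\lesssim\norm{f}_{H^1(\Omega)}\,(H^2+e_{\mathfrak{G}})$, which is $\mathcal{O}(H^2)$ under the hypothesis $e_{\mathfrak{G}}=\mathcal{O}(H^2)$.

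The only genuinely new work is the estimate of the left-hand side of \eqref{test}, which in Theorem \Rref{th11} used the center-of-mass identity \eqref{centerofmass}. Since $Q_K u$ is affine, integrating it over $K$ still reproduces its value at the centroid, $\int_K Q_K u(\xx;\mu)\,d\xx=|K|\,Q_K u(\overline{\xx}_K;\mu)$, so with $e_K=\overline{\xx}_K-\xx_K$ and $\nabla Q_K u$ the (constant) gradient of the affine polynomial I would split
\begin{equation*}
Q_K u(\overline{\xx}_K;\mu)-u(\xx_K;\mu)=\big(Q_K u(\xx_K;\mu)-u(\xx_K;\mu)\big)+\nabla Q_K u\cdot e_K.
\end{equation*}
The first bracket is controlled by \eqref{estimsurxk} and contributes $\mathcal{O}(H^2)$ as before. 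The new contribution is $S:=\sum_{K\in\mm_H}\Phi_i(\xx_K)\,|K|\,\nabla Q_K u\cdot e_K$, and the idea is to sum it group by group along the partition $\mathfrak{G}$: on each $G$ I would freeze $\Phi_i(\xx_K)\nabla Q_K u$ to a single representative value and invoke the defining identity $\sum_{K\in G}|K|\,e_K=|U_G|\,e_G$ together with $|e_G|\le e_{\mathfrak{G}}$. Summing the resulting leading terms over the groups and bounding the representative gradients in aggregate by $\norm{\nabla u}_{L^2(\Omega)}\lesssim\norm{u}_{H^2(\Omega)}$, this part of $S$ is bounded by $\norm{\Phi_i}_{L^\infty(\Omega)}\,\norm{u}_{H^2(\Omega)}\,e_{\mathfrak{G}}=\mathcal{O}(H^2)$.

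The hard part will be controlling the correction incurred by this freezing. It requires the regularity of the grouping (bounded $\mu_G$, hence bounded $\mathrm{Card}(G)$ and $H_K/\mathrm{diam}(B_G)$), the $H^2$ regularity \eqref{h2reg} of $u$ to bound the oscillation of $\nabla Q_K u$ across one group by $\mathcal{O}(H)$, and enough regularity of the continuous reconstruction $\Phi_i$ to bound the oscillation of $\Phi_i(\xx_K)$ similarly; each such oscillation, multiplied by $|K|\,|e_K|\lesssim H_K^{d+1}$ and summed over the $\mathcal{O}(H^{-d})$ groups, produces a higher-order $\mathcal{O}(H^2)$ remainder. Once the left-hand side of \eqref{test} is bounded by $\mathcal{O}(H^2)$, the estimate \eqref{EstimH2prop} follows as in the original proposition, and combining $T_1$, $T_2$ and $T_3$ exactly as at the end of the proof of Theorem \Rref{th11} yields \eqref{estimationNIRBBB}.
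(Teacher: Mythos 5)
Your proposal is correct and follows essentially the same route as the paper: the same $T_1+T_2+T_3$ decomposition, the substitution of the local-grouping super-convergence bound \eqref{superconvhmm2} for \eqref{superconvhmm} in $T_{3,2}$, the split $Q_Ku(\overline{\xx}_K)-u(\xx_K)=(Q_Ku(\xx_K)-u(\xx_K))+\nabla Q_Ku\cdot e_K$, and the group-wise freezing combined with $\sum_{K\in G}|K|e_K=|U_G|e_G$ and $|e_G|\le e_{\mathfrak{G}}$. The only implementation detail worth noting is that the paper controls the oscillation of the averaged gradient $\nabla Q_Ku$ within a group not by a pointwise $\mathcal{O}(H)$ bound (which $H^2$ regularity alone would not give) but by three successive applications of the integral comparison Lemma 7.6 of the super-convergence reference, yielding the $H^2\norm{u}_{W^{2,1}(\Omega)}$ remainder your accounting anticipates.
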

\begin{proof}
In this proof, we will still denote $A\lesssim B $ for $A \leq C B$ with $C$ not depending on $h$ or $H$. The reconstruction $\Phi_i$ of $\Phi_i^h$ must belong to $W^{1,\infty}.$
As in the previous section, with the equation \eqref{centerofmass},
\begin{align}
    \left\rvert \int_{\Omega} (\Pi_1^H u(\mu)-\Pi_0^Hu(\mu)) \cdot \Pi_0^H\Phi_i \ d\xx \right\rvert &= \left\rvert \underset{K\in \mathcal{M}_H}{\sum}  \int_K (Q_K u(\xx;\mu)-u(\xx_K;\mu)) \cdot \Pi_0^H \Phi_i \ d\xx  \right\rvert, \nonumber \\
    &=  \left\rvert \underset{K\in \mathcal{M}_H}{\sum} \Phi_i(\xx_K) |K| \Big[Q_Ku(\overline{\xx}_K;\mu) -u(\xx_K;\mu)\Big] \right\rvert, \nonumber \\
    &\leq \left\rvert \underset{K\in \mathcal{M}_H}{\sum} \Phi_i(\xx_K) |K| \Big[Q_Ku(\overline{\xx}_K;\mu)  - Q_Ku(\xx_K;\mu)\Big] \right\rvert \nonumber \\&+  \norm{\Phi_i}_{L^{\infty}(\Omega)} \underset{K\in \mathcal{M}_H}{\sum}  \left\rvert Q_Ku(\xx_K;\mu) -u(\xx_K;\mu) \right\rvert \textrm{ from the triangle inequality}. 
    \label{test2}
\end{align}

As in the previous section \eqref{inter},
\begin{equation}
    \norm{\Phi_i}_{L^{\infty}(\Omega)}\underset{K\in \mathcal{M}_H}{\sum} |K| \left\rvert Q_Ku(\xx_K;\mu)-u(\xx_K;\mu)\right\rvert \lesssim \norm{\Phi_i}_{L^{\infty}(\Omega)}\norm{u}_{H^2(\Omega)}H^2 \label{inter22}.
\end{equation}
Thus, the inequality \eqref{test2} yields
\begin{equation}
  \left\rvert \int_{\Omega} (\Pi_1^H u(\mu)-\Pi_0^Hu(\mu)) \cdot \Pi_0^H\Phi_i \ d\xx \right\rvert \lesssim\left\rvert \underset{K\in \mathcal{M}_H}{\sum} \Phi_i(\xx_K) |K| \Big[Q_Ku(\overline{\xx}_K;\mu)  - Q_Ku(\xx_K;\mu)\Big] \right\rvert + \norm{\Phi_i}_{L^{\infty}(\Omega)}\norm{u}_{H^2(\Omega)}H^2.
  \label{test3}
\end{equation}

With the triangle inequality, the first term in \eqref{test3} becomes

\begin{align}
  \left\rvert \underset{K\in \mathcal{M}_H}{\sum} \Phi_i(\xx_K) |K| \Big[Q_Ku(\overline{\xx}_K;\mu)  - Q_Ku(\xx_K;\mu)\Big] \right\rvert &\lesssim \left\rvert \underset{K\in \mathcal{M}_H}{\sum} \Big[\Phi_i(\xx_G)+(\Phi_i(\xx_K)-\Phi_i(\xx_G))\Big] |K| \Big[Q_Ku(\overline{\xx}_K;\mu)  - Q_Ku(\xx_K;\mu)\Big] \right\rvert, \nonumber \\
  &\lesssim \left\rvert \underset{K\in \mathcal{M}_H}{\sum} \Phi_i(\xx_G) |K| \Big[Q_Ku(\overline{\xx}_K;\mu)  - Q_Ku(\xx_K;\mu)\Big] \right\rvert \nonumber \\& +  \norm{\nabla \Phi_i}_{L^{\infty}(\Omega)}  \underset{K\in \mathcal{M}_H}{\sum} H_K  |K|  \left\rvert Q_Ku(\overline{\xx}_K;\mu)  - Q_Ku(\xx_K;\mu) \right\rvert  \textrm{ since diam($U_G$) $\leq \mu_G H_K$ }.  \label{test4}
\end{align}

Using the decomposition of the mesh in patches $U_G$ and with the definition of $Q_K$, the first term of \eqref{test4} gives
\begin{align}
  \left\rvert \underset{K\in \mathcal{M}_H}{\sum} \Phi_i(\xx_G) |K| \Big[Q_Ku(\overline{\xx}_K;\mu)  - Q_Ku(\xx_K;\mu)\Big] \right\rvert &\leq \left\rvert  \underset{G\in \mathfrak{G}}{\sum} \underset{K\in G}{\sum} \Phi_i(\xx_G) \frac{|K|}{|B_K|} \int_{B_K}D^1u(\yy)\cdot \eee_K \ d\yy \right\rvert, \nonumber \\
  &\leq \underset{G\in \mathfrak{G}}{\sum} \norm{\Phi_i}_{L^{\infty}(G)}\left\rvert \underset{K\in G}{\sum} \Big( \frac{1}{|B_K|} \int_{B_K}D^1u(\yy) \ d\yy \Big) |K|\ \eee_K  \right\rvert.\label{test8}
\end{align}

Using the definition of $Q_K$ \eqref{defQ}, the second term in \eqref{test4} yields
\begin{align}
  \norm{\nabla \Phi_i}_{L^{\infty}(\Omega)}  \underset{K\in \mathcal{M}_H}{\sum} H_K |K|  \left\rvert Q_Ku(\overline{\xx}_K;\mu)  - Q_Ku(\xx_K;\mu) \right\rvert &=  \norm{\nabla \Phi_i}_{L^{\infty}(\Omega)} \underset{K\in \mathcal{M}_H}{\sum} H_K  \frac{|K|}{|B_K|}  \left\rvert \int_{B_K} D^1u(\yy)\cdot \eee_K \ d\yy \right\rvert, \nonumber \\
  &\lesssim \norm{\nabla \Phi_i}_{L^{\infty}(\Omega)} \underset{K\in \mathcal{M}_H}{\sum} H_K^2 \norm{\nabla u}_{L^1(B_K)}, \textrm{ since $|B_K|\geq \theta_H^{-1} |K|$ \eqref{reg}}, \nonumber \\
  & \leq H^2\norm{\nabla \Phi_i}_{L^{\infty}(\Omega)} \norm{\nabla u}_{L^1(\Omega)}.\label{test6}
\end{align}

Thus \eqref{test4} becomes
\begin{align}
 \left\rvert \underset{K\in \mathcal{M}_H}{\sum} \Phi_i(\xx_K) |K| \Big[ Q_Ku(\overline{\xx}_K;\mu)  - Q_Ku(\xx_K;\mu) \Big] \right\rvert &\lesssim \underset{G\in \mathfrak{G}}{\sum} \norm{\Phi_i}_{L^{\infty}(G)}\left\rvert \underset{K\in G}{\sum} \Big( \frac{1}{|B_K|} \int_{B_K}D^1u(\yy) \ d\yy \Big) |K|\ \eee_K  \right\rvert \nonumber\\ &+ H^2\norm{\nabla \Phi_i}_{L^{\infty}(\Omega)} \norm{\nabla u}_{L^1(\Omega)}. \label{test7}
\end{align}

Now, the Lemma 7.6. in ~\cite{supconv2} is going to be used three times on the first term the right hand side of \eqref{test7}. This lemma reads: \\
Let $U,\ V$ and $O$ be open sets of $\mathbb{R}^d$ such that, for all $(\xx,\mathbf{y})\in U \times V, \ [\xx,\mathbf{y}] \subset O$. There exists $C$ only depending on $d$ such that, for all $\Phi \in W^{1,1}(O)$,
\begin{equation}
\left\rvert \frac{1}{|U|}\int_U \Phi(\xx) \ d\xx - \frac{1}{|V|}\int_V \Phi(\xx) \ d\xx \right\rvert \leq C \frac{\textrm{diam}(O)^{d+1}}{|U||V|}\int_O |\nabla \Phi(\xx)|\ d\xx.
\end{equation}

\noindent We will use it successively with $[U,V,O]=[B_K,K,U_G]$, $[U,V,O]=[K,B_G,U_G]$, and $[U,V,O]=[B_G,U_G,U_G]$.\\

\noindent We use the triangle inequality on \eqref{test8},

\begin{align}
  \underset{G\in \mathfrak{G}}{\sum} \norm{\Phi_i}_{L^{\infty}(G)} \left| \underset{K\in G}{\sum} \Big( \frac{1}{|B_K|} \int_{B_K} D^1 u(\yy) \ d\yy \Big) |K| \ \eee_K  \right| &\leq  \underset{G\in \mathfrak{G}}{\sum} \norm{\Phi_i}_{L^{\infty}(G)} \Big| \underset{K\in G}{\sum} \Big( \left| \frac{1}{|B_K|} \int_{B_K}D^1u(\yy) \ d\yy - \ \frac{1}{|K|}  \int_{K} D^1 u(\mathbf{y};\mu)\ d\mathbf{y} \right| \nonumber \\
    &+ \ \left| \frac{1}{|K|}  \int_{K} D^1 u(\mathbf{y};\mu) \ d\mathbf{y} \ - \ \frac{1}{|B_G|}  \int_{B_G} D^1 u(\mathbf{y};\mu) \ d\mathbf{y} \right| \nonumber  \\
    &+  \left| \frac{1}{|B_G|}  \int_{B_G} D^1 u(\mathbf{y};\mu)\ d\mathbf{y} \ - \ \frac{1}{|U_G|}  \int_{U_G} D^1 u(\mathbf{y};\mu)\ d\mathbf{y} \right|  \nonumber \\
    &+     \frac{1}{|U_G|}  \int_{U_G} D^1 u(\mathbf{y};\mu)\ d\mathbf{y} \Big)\ |K| \ \eee_K \Big| . \label{test10}
    \end{align}
    and we get
    
    \begin{align}
  \underset{G\in \mathfrak{G}}{\sum} \norm{\Phi_i}_{L^{\infty}(G)} \left| \underset{K\in G}{\sum} \Big( \frac{1}{|B_K|} \int_{B_K} D^1 u(\yy) \ d\yy \Big) |K| \ \eee_K  \right| &\lesssim  \underset{G\in \mathfrak{G}}{\sum} \norm{\Phi_i}_{L^{\infty}(G)} \Big| \underset{K\in G}{\sum} \Big( \norm{u}_{W^{2,1}(U_G)}\textrm{diam}(U_G)^d \Big[  \frac{\textrm{diam}(U_G)}{|B_K||K|} \nonumber \\
    &+ \frac{\textrm{diam}(U_G)}{|B_G||K|}+\frac{\textrm{diam}(U_G)}{|U_G||B_G|} \Big] +  \frac{1}{|U_G|}  \int_{U_G} D^1 u(\mathbf{y};\mu) \ d\mathbf{y} \Big) \ |K| \ \eee_K \Big| .  \label{test11}
    \end{align}

\noindent With the regularity factor $\theta_{H}$ (see the previous definition of a polytopal mesh \eqref{reg}), $|K|\leq |B(0,1)|H_K^d \lesssim |B_K|\theta_H^{d}. $ Since $\textrm{Card}(G)$ is bounded by $\mu_G$, $\textrm{diam}(U_G)\leq \mu_G H_K$. Thus, 
$\textrm{diam}(U_G)^d\leq \mu_G^d H_K^d,$ and $\frac{\textrm{diam}(U_G)}{|B_K|}\leq C$, $\ |B_G|\geq \mu_G^{-d} \textrm{diam}(U_G)^d,\ |B_G|\gtrsim \mu_G^{-d}H_K^d\gtrsim \mu_G^{-d}|K|,$ and $|U_G|\geq \textrm{diam}(U_G)^d.$\\
Therefore \eqref{test11} becomes 
    \begin{align}
  \underset{G\in \mathfrak{G}}{\sum} \norm{\Phi_i}_{L^{\infty}(G)} \left| \underset{K\in G}{\sum} \Big( \frac{1}{|B_K|} \int_{B_K} D^1 u(\yy) \ d\yy \Big) |K| \eee_K  \right| &\lesssim  \underset{G\in \mathfrak{G}}{\sum} \norm{\Phi_i}_{L^{\infty}(G)} \Big| \underset{K\in G}{\sum} \Big( \norm{u}_{W^{2,1}(U_G)}\frac{\textrm{diam}(U_G)}{|K|} \nonumber \\
    &+  \frac{1}{|U_G|}  \int_{U_G} D^1 u(\mathbf{y};\mu) \ d\mathbf{y} \Big) \ |K| \ \eee_K \Big|.  \label{test12}
    \end{align}

Since $\textrm{diam}(U_G)\leq  \mu_G H_K$ and $|\eee_K|\leq H_K$,
\begin{align}
  \underset{G\in \mathfrak{G}}{\sum} \norm{\Phi_i}_{L^{\infty}(G)} \left| \underset{K\in G}{\sum} \Big( \frac{1}{|B_K|} \int_{B_K} D^1 u(\yy) \ d\yy \Big) |K|\ \eee_K  \right| &\lesssim  \underset{G\in \mathfrak{G}}{\sum} \norm{\Phi_i}_{L^{\infty}(G)} \Big[ \underset{K\in G}{\sum} H_K^2 \norm{u}_{W^{2,1}(U_G)} \nonumber \\
    &+ \left\rvert \frac{1}{|U_G|} \underset{K\in G}{\sum} \int_{U_G} D^1 u(\mathbf{y};\mu)\ d\mathbf{y}  |K| \eee_K  \right\rvert \Big] . \label{test13}
\end{align}

Then,
\begin{align}
 \underset{G\in \mathfrak{G}}{\sum} \norm{\Phi_i}_{L^{\infty}(G)} \left| \underset{K\in G}{\sum} \Big( \frac{1}{|B_K|} \int_{B_K} D^1 u(\yy) \ d\yy \Big) |K| \eee_K  \right|  &\lesssim  \underset{G\in \mathfrak{G}}{\sum} \norm{\Phi_i}_{L^{\infty}(G)} \underset{K\in G}{\sum} H_K^2 \norm{u}_{W^{2,1}(U_G)} \nonumber \\
    &+   \underset{G\in \mathfrak{G}}{\sum} \norm{\Phi_i}_{L^{\infty}(G)}\left\rvert \frac{1}{|U_G|} \underset{K\in G}{\sum}  |K|\ \eee_K  \right\rvert \left\rvert \int_{U_G} D^1 u(\mathbf{y};\mu)\ d\mathbf{y}  \right\rvert, \label{test14}
\end{align}

which implies, since $\textrm{Card}(G)\leq \mu_G$,
\begin{align}
\underset{G\in \mathfrak{G}}{\sum} \norm{\Phi_i}_{L^{\infty}(G)} \left| \underset{K\in G}{\sum} \Big( \frac{1}{|B_K|} \int_{B_K} D^1 u(\yy) \ d\yy \Big) |K| \eee_K  \right|  &\lesssim  \underset{G\in \mathfrak{G}}{\sum} \norm{\Phi_i}_{L^{\infty}(G)} H^2 \norm{u}_{W^{2,1}(U_G)} \nonumber \\
    &+   \underset{G\in \mathfrak{G}}{\sum} \norm{\Phi_i}_{L^{\infty}(G)}\left\rvert \frac{1}{|U_G|} \underset{K\in G}{\sum}  |K| \ \eee_K  \right\rvert \norm{u}_{W^{1,1}(U_G)}. \label{test15}
\end{align}

and finally,
\begin{equation}
 \underset{G\in \mathfrak{G}}{\sum} \norm{\Phi_i}_{L^{\infty}(G)}\left\rvert \underset{K\in G}{\sum} \frac{1}{|B_K|} \int_{B_K}D^1u(\yy) \ d\yy |K|\ \eee_K  \right\rvert \leq  \norm{\Phi_i}_{L^{\infty}(\Omega)} \norm{u}_{W^{2,1}(\Omega)} H^2 
    +   \norm{\Phi_i}_{L^{\infty}(\Omega)} \underset{G \in \mathfrak{G}}{\max\ } \norm{u}_{W^{1,1}(\Omega)} \left\rvert \frac{1}{|U_G|} \underset{K\in G}{\sum}  |K| \ \eee_K  \right\rvert. \label{test16}
\end{equation}

This results  using \eqref{test2}, \eqref{inter22}, \eqref{test4}, \eqref{test6}, and \eqref{test16} in
\begin{equation}
 \left\rvert \int_{\Omega} (\Pi_1^H u(\mu)-\Pi_0^Hu(\mu)) \cdot \Pi_0^H\Phi_i \ d\xx \right\rvert \lesssim (\norm{\Phi_i}_{W^{1,\infty}(\Omega)}\norm{u}_{W^{2,1}(\Omega)}+\norm{u}_{H^2(\Omega)}\norm{\Phi_i}_{L^{\infty}(\Omega)})H^2+(\norm{\Phi_i}_{L^{\infty}(\Omega)}\norm{u}_{W^{1,1}(\Omega)})e_{\mathfrak{G}}. 
\end{equation}
If $e_{\mathfrak{G}}=\underset{G \in \mathfrak{G}}{\max\ }  \left\rvert \bigg(\frac{1}{|U_G|}  \underset{K \in G}{\sum} |K|\ \eee_K\bigg)\right\rvert$ is in $\mathcal{O}(H^2)$ then the estimate of $ \left\rvert \int_{\Omega} (\Pi_1^H u(\mu)-\Pi_0^Hu(\mu)) \cdot \Pi_0^H\Phi_i \ d\xx \right\rvert$ is in $\mathcal{O}(H^2).$
This concludes the proof since the rest is similar to the one of Theorem \Rref{th11}. Note that for the estimate of $T_{3,2}$ \eqref{term2bis}, the equation \eqref{superconvhmm2} from the Theorem of super-convergence with local grouping is used instead of \eqref{superconvhmm}.
\end{proof}
\section{Some details on the implementation and numerical results}
We consider two simple cases in 2D for the numerical results based with the TPFA scheme. Both results are computed on the unit square. We use an harmonic averaging of the diffusion coefficient(~\cite{jd} section 5.3). Our variable parameter is $\mu\in \mathbb{R}^4=(\mu_1,\mu_2,\mu_3,\mu_4)$. For both cases, the size of mesh $h$ is defined as the maximum length of the edges. The diffusion coefficient we consider here is $A(\mu)=(2\mu_1+\mu_2 \sin(x+y)\cos(xy))$ and $f=(\mu_3(1-y)+\mu_4x(1-x))$.
We choose random coefficients for the snapshots with $N=5$ and our solution is defined with $\mu_1=0.99,\ \mu_2=0.8,\ \mu_3=0.2,\ \mu_4=0.78$. For the exact solution, we consider the TPFA solution on a finer mesh (Figures \Rref{fig:rectangle}, \Rref{fig:triangle}). For the computation of the norm, we use the discrete semi-norm as in the remark of the section \Rref{sect2} \eqref{discretenorme}. NIRB results are compared to the classical finite volume error  (Figures \Rref{fig:resurect}, \Rref{fig:resutriangle}). We measure the following relative error
\begin{equation}
  \frac{\norm{u(\mu) - u_{Hh}^N(\mu)}_{\ttens,2}}{\norm{u(\mu)}_{\ttens,2}}.
  \end{equation}


\paragraph{Uniform grid}

The first case presents results on a rectangular uniform grid where $\xx_K $ is the center of mass of the cell.

\vspace{-0.8cm}
\begin{figure}[H]
  \centering
  \includegraphics[scale=0.13]{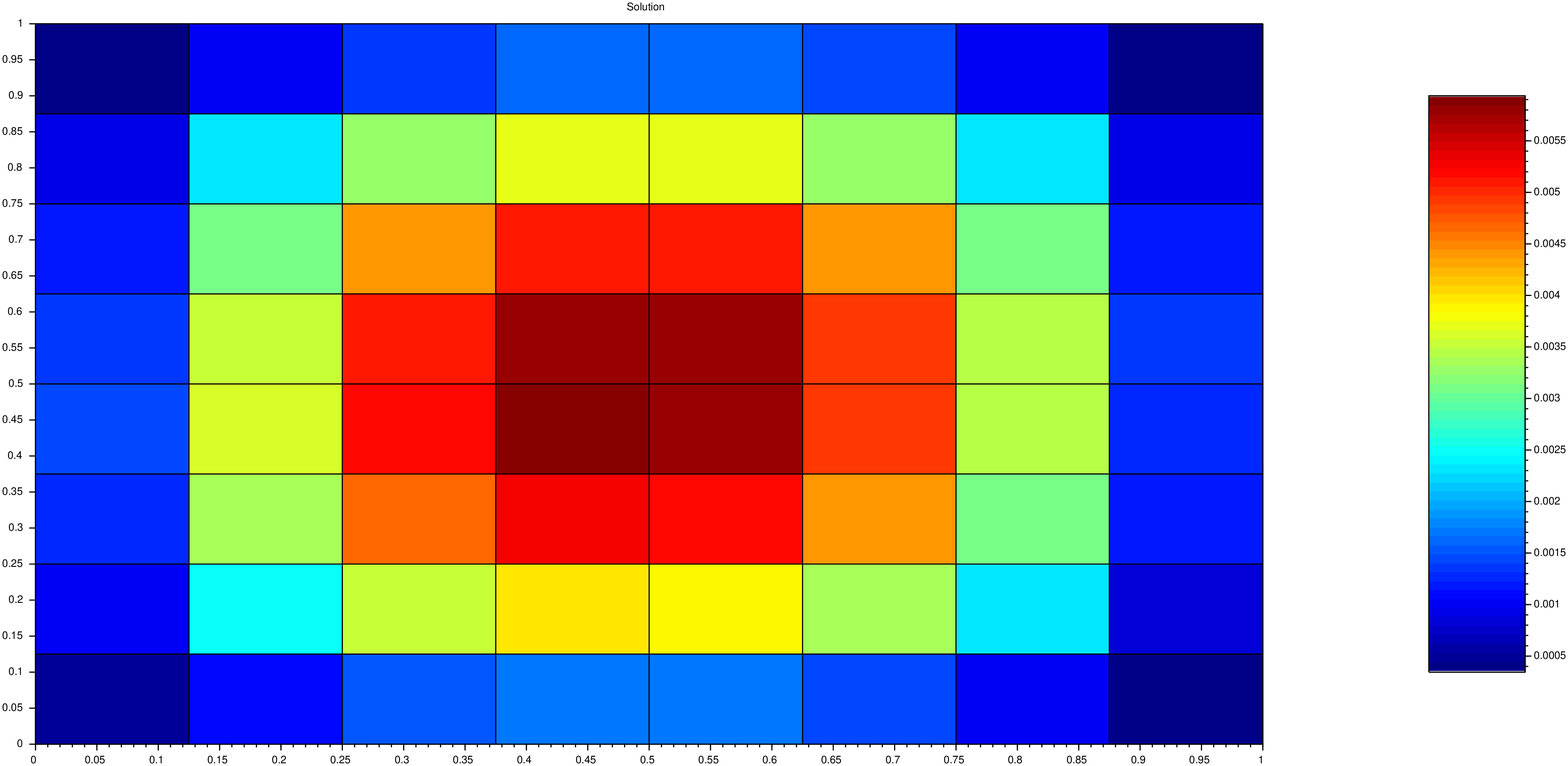}\qquad
\includegraphics[scale=1.5]{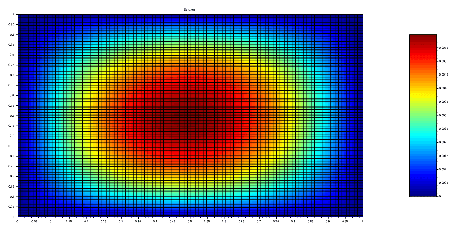}
\caption{coarse and fine solution with the uniform grid}\label{fig:rectangle}
\end{figure}

\begin{figure}[H]
    \centering
    \includegraphics[scale=0.3]{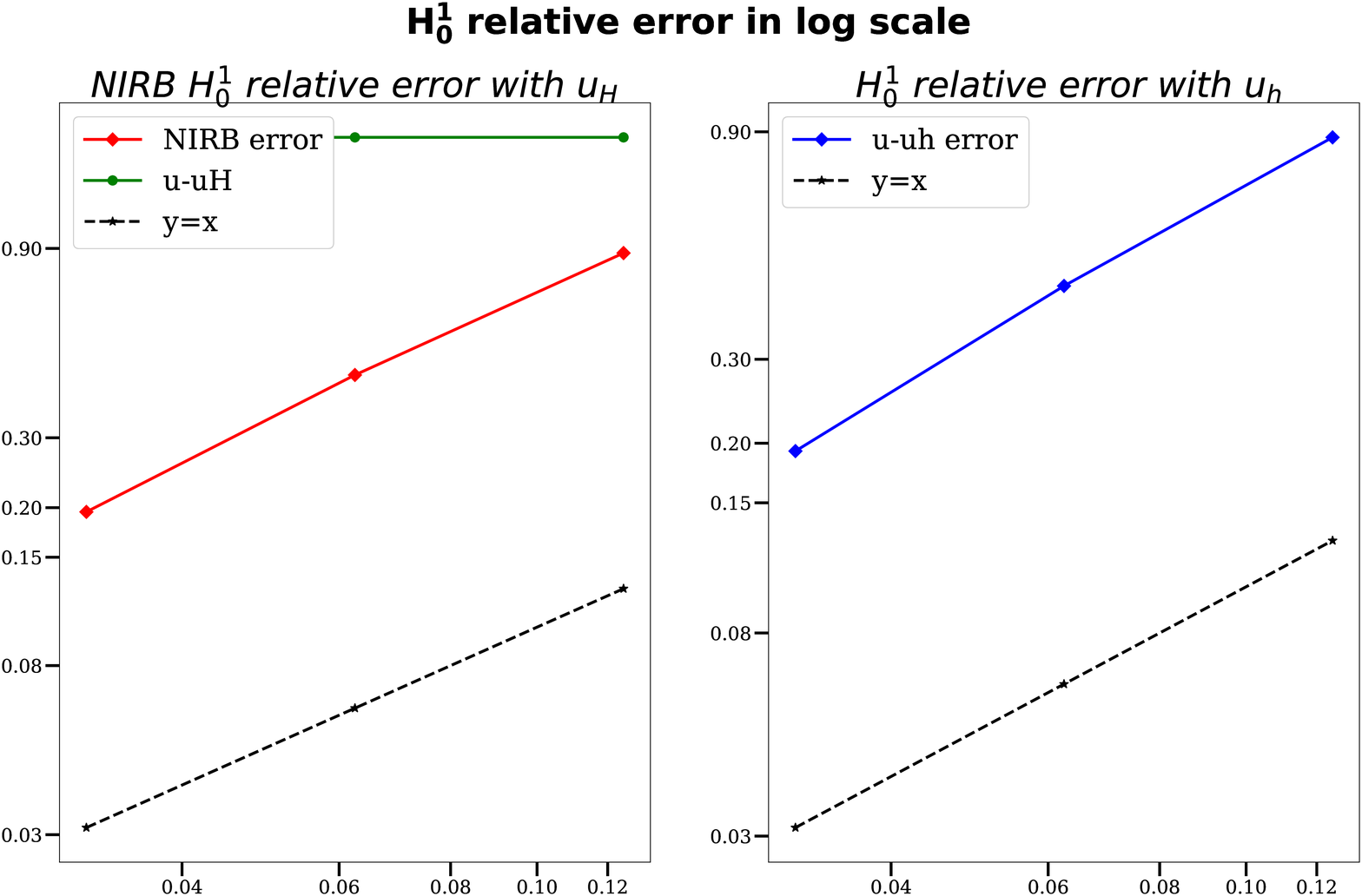}
   \caption{Numerical result on the uniform grid}
  \label{fig:resurect}
\end{figure}

\paragraph{Triangular mesh}

The second case is defined on a triangular mesh where $\xx_K $ are the circumcenter of the cells, such that $e_{\mathfrak{G}}$ is in $\mathcal{O}(H^2)$. \\

\vspace{-0.5cm}
\begin{figure}[H]
  \centering
\includegraphics[scale=0.13]{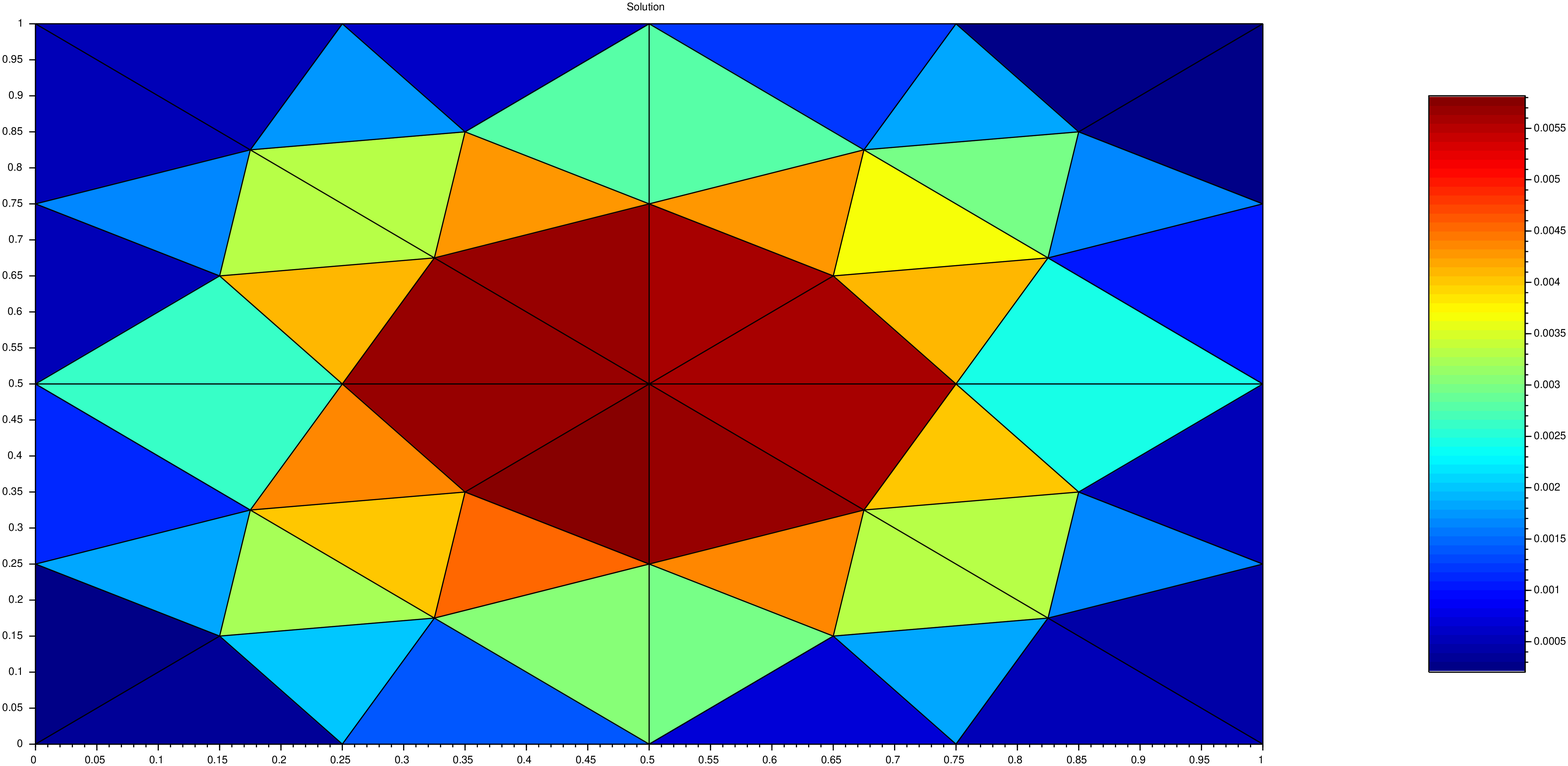}\qquad
\includegraphics[scale=1.5]{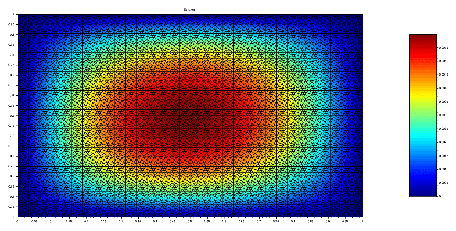}
\caption{coarse and fine solution with the triangular mesh}\label{fig:triangle}
\end{figure}

\begin{figure}[H]
    \centering
    \includegraphics[scale=0.3]{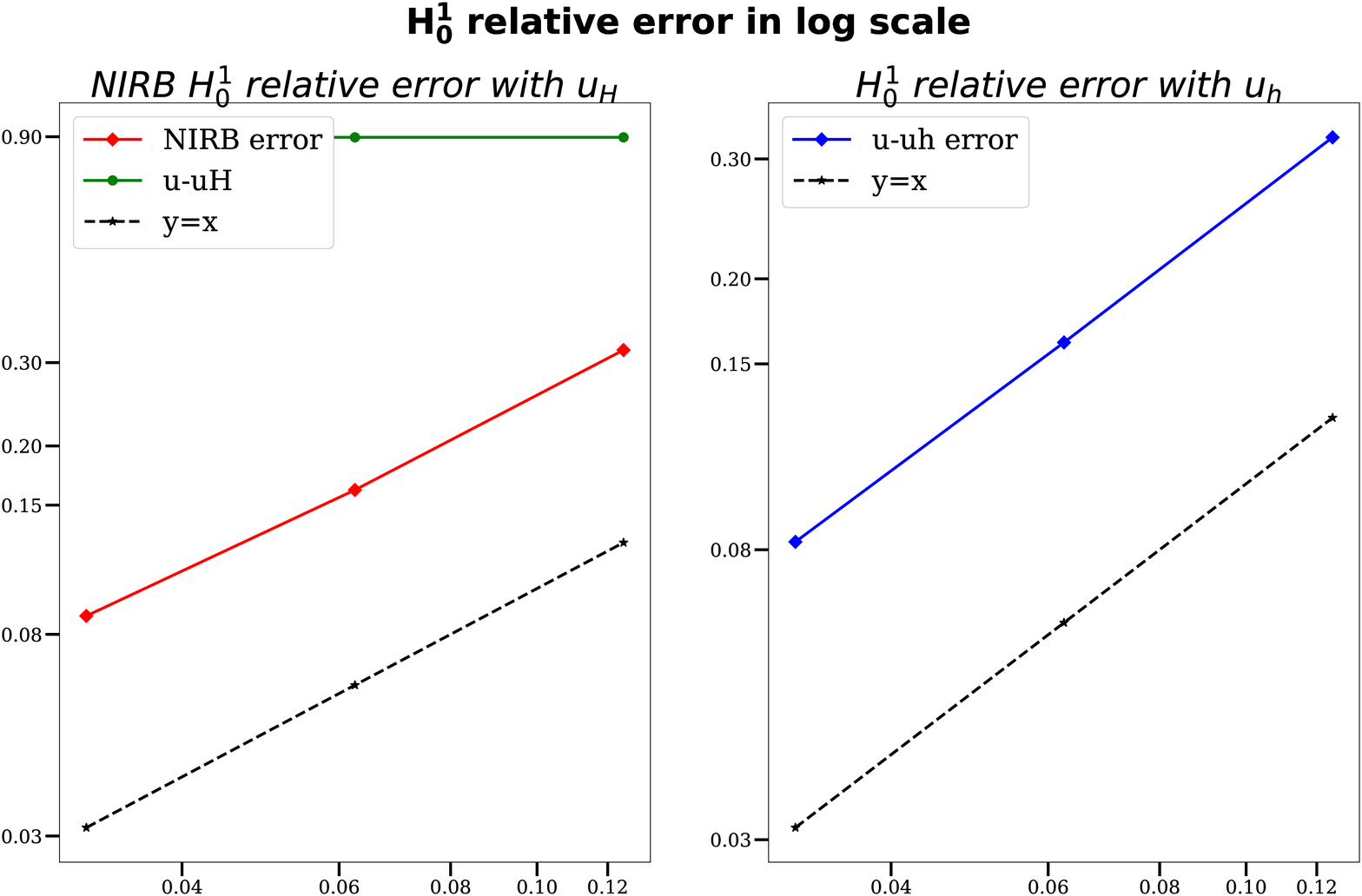}
    \caption{Numerical result on the triangular mesh}
    \label{fig:resutriangle}
  
\end{figure}

\paragraph{Discussion on the implementation}

We implemented the TPFA scheme on Scilab and retrieved several solutions for the NIRB algorithm on Python to highlight the black box side of the solver. 

\begin{itemize}
\item Implementation of TPFA\\
  The TPFA on $\mathcal{T}_h$ reads:
  Find $u_h=(u_K)_{K \in \mm}$ such that:\\
  \begin{equation}
    \forall K\in \mm_h,\underset{\sigma \in \mathcal{F}_K\cap \mathcal{F}_{int}}{\sum} \tau_{\sigma}(u_K-u_L) + \underset{\sigma \in \mathcal{F}_K\cap \mathcal{F}_{ext}}{\sum} \tau_{\sigma} u_K= \int_K f(\xx) d\xx,
     \end{equation}
    where the harmonic average $\tau_{\sigma}=|\sigma|\frac{A(\xx_L;\mu)A(\xx_K;\mu)}{A(\xx_L;\mu)\times d_{L,\sigma}+ A(\xx_K;\mu)\times d_{K,\sigma}}$ on $\mathcal{F}_{int}$, and $\tau_{\sigma}=|\sigma|\frac{A(\xx_K;\mu)}{d_{K,\sigma}}$ on $\mathcal{F}_{ext}$.
   
   To assemble the matrices $A$ of the TPFA scheme, we iterate on each edge, and we add the harmonic average $\tau_{\sigma}$ on each cell, and for $b$ we add the term $|D_{K,\sigma}|\times f(x_K)$.

 \vspace{1cm}
  \item Time execution (min,sec)

    \begin{tabular}{ |c ||c| c| }
      \hline
      & NIRB Online & FV solver \\
      \hline
      uniform grid & 00:06 & 01:48 \\
      \hline
      triangular mesh & 00:05 & 01:15 \\
      \hline
    \end{tabular}
    
\end{itemize}
 \vspace{1cm}
 \begin{rmrk}
   Note that for the discontinuous diffusion coefficient $A$, with the TPFA scheme, we recovered numerically the same estimate as in the Lipschitz continuous case, when we use the harmonic mean even if the proof no longer works.
   
\end{rmrk}

 \paragraph{Acknowledgements}
 This work is supported by the FUI MOR\_DICUS. We would like to give special thanks to Nora Aïssiouene at the LJLL for her precious help.

\newpage



\bibliography{mybibfile}
\bibliographystyle{plain}

\end{document}